\documentclass[a4paper,10pt]{amsart}
\usepackage{amsmath,amsthm,amssymb,latexsym,enumerate,xcolor,hyperref}
\usepackage{graphicx}

\usepackage{comment}

\numberwithin{equation}{section}

\begin{document}

	\newtheorem{thm}{Theorem}[section]
	\newtheorem{prop}[thm]{Proposition}
	\newtheorem{lem}[thm]{Lemma}
	\newtheorem{cor}[thm]{Corollary}
	\newtheorem{rem}[thm]{Remark}
	\newtheorem*{defn}{Definition}

	\newtheorem{definit}[thm]{Definition}
	\newtheorem{setting}{Setting}
	\renewcommand{\thesetting}{\Alph{setting}}
	
	\newcommand{\DD}{\mathbb{D}}
	\newcommand{\NN}{\mathbb{N}}
	\newcommand{\ZZ}{\mathbb{Z}}
	\newcommand{\QQ}{\mathbb{Q}}
	\newcommand{\RR}{\mathbb{R}}
	\newcommand{\CC}{\mathbb{C}}
	\renewcommand{\SS}{\mathbb{S}}

	\renewcommand{\theequation}{\arabic{section}.\arabic{equation}}

	\newcommand{\supp}{\mathop{\mathrm{supp}}}    
	
	\newcommand{\re}{\mathop{\mathrm{Re}}}   
	\newcommand{\im}{\mathop{\mathrm{Im}}}   
	\newcommand{\dist}{\mathop{\mathrm{dist}}}  
	\newcommand{\link}{\mathop{\circ\kern-.35em -}}
	\newcommand{\spn}{\mathop{\mathrm{span}}}   
	\newcommand{\ind}{\mathop{\mathrm{ind}}}   
	\newcommand{\rank}{\mathop{\mathrm{rank}}}   
	\newcommand{\Fix}{\mathop{\mathrm{Fix}}}   
	\newcommand{\codim}{\mathop{\mathrm{codim}}}   
	\newcommand{\conv}{\mathop{\mathrm{conv}}}   
	\newcommand{\epsi}{\mbox{$\varepsilon$}}
	\newcommand{\eps}{\mathchoice{\epsi}{\epsi}
		{\mbox{\scriptsize\epsi}}{\mbox{\tiny\epsi}}}
	\newcommand{\cl}{\overline}
	\newcommand{\pa}{\partial}
	\newcommand{\ve}{\varepsilon}
	\newcommand{\zi}{\zeta}
	\newcommand{\Si}{\Sigma}
	\newcommand{\cA}{{\mathcal A}}
	\newcommand{\cG}{{\mathcal G}}
	\newcommand{\cH}{{\mathcal H}}
	\newcommand{\cI}{{\mathcal I}}
	\newcommand{\cJ}{{\mathcal J}}
	\newcommand{\cK}{{\mathcal K}}
	\newcommand{\cL}{{\mathcal L}}
	\newcommand{\cN}{{\mathcal N}}
	\newcommand{\cR}{{\mathcal R}}
	\newcommand{\cS}{{\mathcal S}}
	\newcommand{\cT}{{\mathcal T}}
	\newcommand{\cU}{{\mathcal U}}
	\newcommand{\OM}{\Omega}
	\newcommand{\B}{\bullet}
	\newcommand{\ol}{\overline}
	\newcommand{\ul}{\underline}
	\newcommand{\vp}{\varphi}
	\newcommand{\AC}{\mathop{\mathrm{AC}}}   
	\newcommand{\Lip}{\mathop{\mathrm{Lip}}}   
	\newcommand{\es}{\mathop{\mathrm{esssup}}}   
	\newcommand{\les}{\mathop{\mathrm{les}}}   
	\newcommand{\nid}{\noindent}
	\newcommand{\pzr}{\phi^0_R}
	\newcommand{\pir}{\phi^\infty_R}
	\newcommand{\psr}{\phi^*_R}
	\newcommand{\pow}{\frac{N}{N-1}}
	\newcommand{\ncl}{\mathop{\mathrm{nc-lim}}}   
	\newcommand{\nvl}{\mathop{\mathrm{nv-lim}}}  
	\newcommand{\la}{\lambda}
	\newcommand{\La}{\Lambda}    
	\newcommand{\de}{\delta}    
	\newcommand{\fhi}{\varphi} 
	\newcommand{\ga}{\gamma}    
	\newcommand{\ka}{\kappa}   
	
	\newcommand{\core}{\heartsuit}
	\newcommand{\diam}{\mathrm{diam}}

	\newcommand{\lan}{\langle}
	\newcommand{\ran}{\rangle}
	\newcommand{\tr}{\mathop{\mathrm{tr}}}
	\newcommand{\diag}{\mathop{\mathrm{diag}}}
	\newcommand{\dv}{\mathop{\mathrm{div}}}
	
	\newcommand{\al}{\alpha}
	\newcommand{\be}{\beta}
	\newcommand{\Om}{\Omega}
	\newcommand{\na}{\nabla}
	
	\newcommand{\cC}{\mathcal{C}}
	\newcommand{\cM}{\mathcal{M}}
	\newcommand{\nr}{\Vert}
	\newcommand{\De}{\Delta}
	\newcommand{\cX}{\mathcal{X}}
	\newcommand{\cP}{\mathcal{P}}
	\newcommand{\om}{\omega}
	\newcommand{\si}{\sigma}
	\newcommand{\te}{\theta}
	\newcommand{\Ga}{\Gamma}
	
	\newcommand{\vV}{\mathbf{v}}
	\newcommand{\lbunu}{\ul{m}}
	\newcommand{\ca}{\tilde{a}}
	\newcommand{\Vve}{\ul{\varepsilon}}

	\newcommand{\cF}{\mathcal{F}}
	\newcommand{\cD}{\mathcal{D}}
	
	\title[Remarks about the mean value property and
	Poincar\'e
	inequalities]{Remarks about the mean value property
		and
		\\
		some weighted Poincar\'e-type inequalities
		}

	\author{Giorgio Poggesi}
	\address{Department of Mathematics and Statistics, The University of Western Australia, 35 Stirling Highway, Crawley, Perth, WA 6009, Australia}
	%
	%
	\email{giorgio.poggesi@uwa.edu.au}
	
	\begin{abstract}
	We start providing a quantitative stability theorem for the rigidity
	of
	an overdetermined problem involving harmonic functions in a punctured domain. Our approach is inspired by and based on the proof of rigidity established by Enciso and Peralta-Salas in \cite{EPS}, and reveals essential differences with respect to the stability results obtained in the literature for the classical overdetermined Serrin problem. 	
		
	Secondly, we provide new weighted Poincar\'e-type inequalities for vector fields. These are crucial tools for the study of the quantitative stability issue initiated
	in \cite{Pog3} concerning a class of rigidity results involving mixed boundary value problems.
	%
	%
	
	
	Finally, we provide a mean value-type property and an associated weighted Poincar\'e-type inequality for harmonic functions in cones.
	A duality relation between this new mean value property and a partially overdetermined boundary value problem is discussed, providing an extension of a classical result obtained in \cite{PS}. 
	\end{abstract}

\maketitle

\section{Introduction}

The paper comprises three self-contained sections. 

Section \ref{sec:Quantitative rigidity delta} provides quantitative stability (Theorem \ref{thm:stability delta}) of a rigidity result for an overdetermined problem that is related to the spherical mean value property for harmonic functions. Such a stability result reveals essential differences with respect to the stability results obtained so far in the literature for the classical overdetermined Serrin problem: details are provided in Section \ref{sec:Quantitative rigidity delta}.
The proof of our stability result is inspired by and based on the proof of rigidity established by Enciso and Perlata-Salas in \cite{EPS}. 

In Section \ref{sec:Poincare}, we provide new weighted Poincar\'e-type inequalities for vector fields. These are crucial tools for the study of the quantitative stability issue initiated
in \cite{Pog3} concerning a class of rigidity results involving mixed boundary value problems.

Section \ref{sec:cones} provides a new mean value-type property and an associated weighted Poincar\'e-type inequality for harmonic functions in cones. Moreover, a duality relation between this new mean value property and a partially overdetermined boundary value problem is discussed, providing an extension of a classical result obtained in \cite{PS}. 

\section{Quantitative stability for an overdetermined problem}\label{sec:Quantitative rigidity delta}

The present section provides quantitative stability of a rigidity result for an overdetermined problem involving harmonic functions in a punctured domain.
More precisely, we consider the following Dirichlet problem:
\begin{equation}\label{eq:delta}
	\begin{cases}
		- \De u = | \pa \Om| \, \de_0 \quad & \mbox{ in } \Om , 
		\\
		u= c \quad & \mbox{ on } \pa\Om,
	\end{cases}
\end{equation} 
where $\de_0$ denotes the Dirac delta centered at the origin $0 \in \Om \subset \RR^N $, and $c$ is a constant.

Rigidity for Problem \eqref{eq:delta} under the overdetermined condition $|\na u| \equiv const.$ on $\pa \Om$ was proved in \cite[Theorems III.1-III.2]{PS} via duality, in \cite{AR} via the method of moving planes, and in \cite{EPS} with a proof in the wake of Weinberger (\cite{We}).
	In the planar case ($N=2$), rigidity and stability were obtained in \cite{AM1,AM2}, by means of the theory of conformal mappings (see (iii) of Remark \ref{rem:generalizations and comparison with planar case} for details).
Such an overdetermined problem is related to the spherical mean value property for harmonic functions via the duality theorem established in \cite[Theorems III.1]{PS}.

To the author's knowledge, for $N>2$ the stability issue for such an overdetermined problem has not been studied in the literature yet. On the contrary, stability for the classical overdetermined Serrin problem (\cite{Se,We,NT}) has been extensively studied by several authors.
The stability results for the classical Serrin problem so far obtained in the literature can be essentially divided into two groups\footnote{See also \cite{GO,O} for a further recent alternative approach based on a modified implicit function theorem.}, depending on the method employed. The first -- which includes \cite{ABR, CMV} -- follows the tracks of \cite{Se} and is based on a quantitative study of the method of moving planes, the second --  which includes \cite{BNST, Feldman, MP2, MP3} --  follows the tracks of \cite{We} and is based on integral identities and inequalities.
A common feature of the second group is that quantitative stability results are achieved based on proofs of rigidity which follow the tracks of 
%
%
\cite{We}, but, in contrast with \cite{We}, avoid using the maximum principle for the $P$-function\footnote{More precisely, \cite{BNST, Feldman} are based on the proof of rigidity established in \cite{BNST ARMA}, whereas \cite{MP2,MP3} are based on the proof
	in \cite[Theorem 2.1]{MP2} which was inspired by \cite[Theorems I.1, I.2]{PS}. We refer the interested reader to the surveys \cite{NT, Ma} for more details.}.
On the contrary, the following proof, which is inspired by the proof of rigidity in \cite{EPS}, shows that, for Problem \eqref{eq:delta}, the maximum principle for the $P$-function and the isoperimetric inequality nicely combine to allow to achieve quantitative stability estimates.

\begin{rem}\label{rem:oss iniz}
	{\rm
		(i) It is easy to show that, with the normalization adopted in \eqref{eq:delta}, the mean value of $|\na u|$ over $\pa\Om$ is $1$, that is, $\int_{\pa\Om} | \na u | \, dS_x = | \pa\Om |$.

		(ii)(Asymptotic expansion). As in \cite{EPS}, we deduce from \cite{KV} (see also \cite{SerrinSingularities})
		that the asymptotic behaviour of the solution $u$ of \eqref{eq:delta} near the origin is given by
		\begin{equation}
			\label{eq:asymptoticu}
			u(x) =
			\begin{cases}
				\frac{1}{N-2} \left( \frac{|\pa\Om|}{ \om_N} \right) \, |x|^{- (N-2) } + o \left( |x|^{ -(N-2)} \right) \quad & \text{ if } N>2 ,
				\\
				- \left( \frac{|\pa\Om|}{ \om_N} \right) \, \log |x| + O(1) \quad & \text{ if } N=2  ,
			\end{cases}
		\end{equation}	
	where $\om_N$ denotes the surface measure of the unit sphere in $\RR^N$.	
	}
\end{rem}

\bigskip

If a Serrin-type overdetermined condition requiring $|\na u|$ to be constant on $\partial\Omega$, then, in light of (i), such an overdetermined condition must be
\begin{equation}\label{eq:interno-overdetermination}
	| \na u| \equiv 1  \mbox{ on } \pa\Om .
\end{equation}

In order to state the main result of the present section, we set
\begin{equation}
	\cF(\Om):= \min_{z \in \RR^N} \left\lbrace \frac{| \Om \De B_r(z)|}{r^N} \, : \, |\Om|=|B_r|  \right\rbrace ,
\end{equation}
where $B_r(z)$ denotes the ball of radius $r$ centered at $z$.

\begin{thm}\label{thm:stability delta}
	Let $\Om \subset \RR^N$ be a bounded domain of class $C^{1,\al}$, $0<\al<1$, and let $u$ be solution of \eqref{eq:interno-overdetermination}. 
	
	There exists a constant $C(N)$ such that
	\begin{equation}\label{eq:stability result}
		\cF(\Om) \le C(N) \, \sqrt{ \frac{1}{| \pa\Om |} \int_{\pa\Om} \left( \cM - |\na u| \right) dS_x }  \, ,
	\end{equation}
	where we have set
	\begin{equation*}
		\cM := \max_{\pa \Om} |\na u|.
	\end{equation*}
\end{thm}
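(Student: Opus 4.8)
The plan is to reduce \eqref{eq:stability result} to the sharp quantitative isoperimetric inequality. First I would normalize $c=0$ (harmless, since $\na u$ is unchanged if a constant is added to $u$), so that $u>0$ in $\Om\setminus\{0\}$ with $u=0$ on $\pa\Om$ and $-\De u=|\pa\Om|\,\de_0$. By (i) of Remark \ref{rem:oss iniz} one has $\int_{\pa\Om}|\na u|\,dS_x=|\pa\Om|$, hence
\begin{equation*}
	\frac{1}{|\pa\Om|}\int_{\pa\Om}\bigl(\cM-|\na u|\bigr)\,dS_x=\cM-1 ,
\end{equation*}
so \eqref{eq:stability result} is equivalent to $\cF(\Om)\le C(N)\sqrt{\cM-1}$. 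Let $R$ be the radius of the ball having volume $|\Om|$ and $\de(\Om):=|\pa\Om|/\bigl(\om_N R^{N-1}\bigr)-1\ge 0$ the isoperimetric deficit. It then suffices to prove the two facts
\begin{equation*}
	\de(\Om)\le \cM-1 \qquad\text{and}\qquad \cF(\Om)\le C(N)\sqrt{\de(\Om)} ;
\end{equation*}
the second is just the quantitative isoperimetric inequality of Fusco--Maggi--Pratelli, once one observes that $\cF(\Om)=|B_1|\,A(\Om)$ with $A(\Om)$ the Fraenkel asymmetry.

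The core step is the first inequality, and here the maximum principle enters, in the spirit of the rigidity argument of \cite{EPS}. Let $B_{R_{\mathrm{in}}}$ be the largest ball centered at the origin contained in $\Om$, so that $R_{\mathrm{in}}\le R$, and pick a contact point $x_0\in\pa\Om\cap\pa B_{R_{\mathrm{in}}}$; since $\pa\Om\in C^{1,\al}$, the outward unit normals of $\Om$ and of $B_{R_{\mathrm{in}}}$ at $x_0$ coincide and equal $x_0/R_{\mathrm{in}}$. Let $v$ solve $-\De v=|\pa\Om|\,\de_0$ in $B_{R_{\mathrm{in}}}$ with $v=0$ on $\pa B_{R_{\mathrm{in}}}$, for which $|\na v|\equiv |\pa\Om|/\bigl(\om_N R_{\mathrm{in}}^{N-1}\bigr)$ on $\pa B_{R_{\mathrm{in}}}$. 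Then $w:=u-v$ is harmonic in $B_{R_{\mathrm{in}}}$ (the two Dirac masses cancel), $w=u\ge 0$ on $\pa B_{R_{\mathrm{in}}}$ and $w(x_0)=0$; moreover $w\not\equiv 0$ (otherwise $\Om$ would be a ball centered at the origin and \eqref{eq:stability result} would be trivial), so $w>0$ in $B_{R_{\mathrm{in}}}$ by the strong maximum principle and Hopf's lemma gives $\pa_\nu w(x_0)<0$. Since $\na u$ points inward on $\pa\Om$, this reads
\begin{equation*}
	\cM \ge |\na u(x_0)| = -\pa_\nu u(x_0) > -\pa_\nu v(x_0) = \frac{|\pa\Om|}{\om_N R_{\mathrm{in}}^{N-1}} \ge \frac{|\pa\Om|}{\om_N R^{N-1}} = 1+\de(\Om),
\end{equation*}
which is exactly $\de(\Om)<\cM-1$. (Equivalently, this comparison can be organized around the $P$-function $P=|\na u|^2$, which is subharmonic in $\Om\setminus\{0\}$ by Bochner's identity, together with the explicit subsolution $|\na v|^2=\bigl(|\pa\Om|/\om_N\bigr)^2|x|^{-2(N-1)}$; note that, unlike in the rigidity proof, only the inscribed ball — not the circumscribed one — is needed.)

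Combining the two facts yields \eqref{eq:stability result}. The point requiring care is the Dirac mass at the origin: $u$ and $v$ blow up there, so the maximum principle and Hopf's lemma must be applied to the \emph{difference} $u-v$, whose singularities cancel — this is precisely where the asymptotic expansion \eqref{eq:asymptoticu} (equivalently, the fact that $u$ minus a fundamental solution extends harmonically across the origin) is used — and the sharp exponent $\tfrac12$ in \eqref{eq:stability result} forces one to invoke the quantitative isoperimetric inequality in its sharp form, since the PDE part of the argument only produces the \emph{linear} bound $\de(\Om)\le\cM-1$. The remaining ingredients — the identity $\int_{\pa\Om}|\na u|\,dS_x=|\pa\Om|$, the relation $\cF(\Om)=|B_1|\,A(\Om)$, and the volume comparison $|B_{R_{\mathrm{in}}}|\le|\Om|$ giving $R_{\mathrm{in}}\le R$ — are elementary.
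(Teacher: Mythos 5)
Your proposal is correct and reaches the key estimate $\cD(\Om)\le\cM-1$ by a genuinely different route than the paper. For $N>2$ the paper follows \cite{EPS} closely: it works with Philippin's $P$-function $P=|\na u|^2/u^{2(N-1)/(N-2)}$, chooses the constant $c$ in \eqref{eq:delta} so that $\sup_\Om P=P(0)$, computes $\int_\Om P\,dx$ explicitly from a divergence identity, and extracts $\cD(\Om)\le 4(\cM-1)$ after splitting according to whether $\cM-1\le 1/4$; a separate Pohozaev-type argument handles $N=2$. You instead compare $u$ with the radial solution $v$ in the largest ball $B_{R_{\mathrm{in}}}$ centered at the origin inscribed in $\Om$: the Dirac masses cancel, so $w=u-v$ is harmonic across the origin (one does not even need \eqref{eq:asymptoticu} here — the distributional identity $\De w=0$ in $B_{R_{\mathrm{in}}}$ plus Weyl's lemma already suffices), and the strong maximum principle together with Hopf's lemma at a contact point $x_0\in\pa\Om\cap\pa B_{R_{\mathrm{in}}}$ yields $\cM\ge|\na u(x_0)|>|\pa\Om|/\bigl(\om_N R_{\mathrm{in}}^{N-1}\bigr)\ge 1+\cD(\Om)$, the last step using $|B_{R_{\mathrm{in}}}|\le|\Om|$. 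Your route is cleaner in several respects: it gives the strict bound $\cD(\Om)<\cM-1$ unconditionally (no smallness case split, and a better constant) and treats all $N\ge 2$ uniformly, whereas the paper needs two separate arguments. What the paper's route buys is its deliberate fidelity to the $P$-function rigidity proof of \cite{EPS} — which is the stated theme of the section — together with a scheme that transfers directly to the $p$-Laplacian and anisotropic extensions noted in Remark~\ref{rem:generalizations and comparison with planar case}; your inscribed-ball comparison also generalizes through the comparison principle for those operators, just less automatically. One point worth making explicit: $w\equiv 0$ forces $\Om=B_{R_{\mathrm{in}}}$, in which case both sides of \eqref{eq:stability result} vanish, so the strict Hopf inequality applies in all nontrivial cases.
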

\begin{proof}
	(the case $N>2$).	
	As in \cite{EPS}, we consider the P-function 
	$$P= \frac{|\na u|^2}{u^{\frac{2(N-1)}{N-2} }} \, ; $$
	by virtue of the
	harmonicity of $u$ in $\Om \setminus \lbrace 0 \rbrace$,
	\cite[Lemma 1]{Philippin} ensures that the strong maximum principle holds for $P$ in $\Om \setminus \lbrace 0 \rbrace$.
	
	Notice that, if $v$ is solution of $\De v = |\pa \Om| \de_0$ in $\Om$, $v=0$ on $\pa \Om$, then, for any constant $c$, $u = v+c$ satisfies \eqref{eq:delta} and $\na u = \na v$. Hence, we can set
	\begin{equation*}
		c := \frac{ \cM^{\frac{N-2}{N-1}}}{N-2} \left( \frac{| \pa\Om |}{ \om_N } \right)^{\frac{1}{N-1}}
	\end{equation*}
	so that
	\begin{equation*}
		P(0) = (N-2)^{\frac{2(N-1)}{N-2} } \left( \frac{ \om_N }{| \pa\Om |} \right)^{ \frac{2}{N-2}} \ge \frac{|\na u (x)|^2}{c^{\frac{2(N-1)}{N-2} }} = P(x) \quad \text{ for } x \in \pa\Om ,
	\end{equation*}
	where the first identity follows form \eqref{eq:asymptoticu}.
	Hence, by the maximum principle for $P$,
	\begin{equation}\label{eq:sup in P0}
		\sup_\Om P = P(0).
	\end{equation}
	Since a direct computation shows that 
	\begin{equation*}
		- \dv \left( u^{-\frac{N}{N-2} } \na u \right) = \frac{N}{N-2} P \quad \quad \text{ in } \Om\setminus \left\lbrace 0 \right\rbrace ,
	\end{equation*}
	we readily obtain
	$$
	\int_{\Om} P \, dx = \frac{N-2}{N} c^{ - \frac{N}{N-2}} \int_{\pa\Om} (-u_\nu) \, dS_x
	= \frac{N-2}{N} c^{ - \frac{N}{N-2}} | \pa\Om | ,
	$$
	where the last identity follows by (i) of Remark \ref{rem:oss iniz}. Hence, by direct computation and using that $ |B_1| = \om_N /N $, we find that
	\begin{equation*}
		\frac{1}{| \Om |} \int_{\Om} \left[ P(0) - P(x) \right] dx
		=
		\left( \frac{(N-2)^{N-1} \om_N }{ | \pa \Om | } \right)^{\frac{2}{N-2}} \left\lbrace 1 - \left[ \frac{| \pa\Om |}{N | B_1 |^{\frac{1}{N}} |\Om|^{\frac{N-1}{N}}} \right]^{\frac{N}{N-1}} \, \cM^{- \frac{N}{N-1}} \right\rbrace , 
	\end{equation*}
	and after simple manipulations,
	\begin{multline}\label{eq:intermedia}
		\left( \frac{ | \pa \Om | }{(N-2)^{N-1} \om_N } \right)^{\frac{2}{N-2}}	\frac{1}{| \Om |} \int_{\Om} \left[ P(0) - P(x) \right] dx + \cD(\Om)
		\\
		=
		\left[ 1 + \cD(\Om ) \right] 
		\left\lbrace 1 - \frac{ \left[ 1 + \cD(\Om) \right]^{\frac{1}{N-1}}}{\cM^{ \frac{N}{N-1}}} \right\rbrace ,
	\end{multline}
	where we have set
	\begin{equation}\label{eq:isoperimetric deficit}
		\cD(\Om) := \frac{| \pa\Om |}{N | B_1 |^{\frac{1}{N}} |\Om|^{\frac{N-1}{N}}} - 1 ,
	\end{equation}
	that is the (scaling invariant) isoperimetric deficit. Being as $\cD(\Om) \ge 0$ (by the isoperimetric inequality), $\cM \ge 1$ (by (i) of Remark \ref{rem:oss iniz}), and $\frac{N}{N-1}\le 2$, we compute that
	\begin{equation*}
		1 - \frac{ \left[ 1 + \cD(\Om) \right]^{\frac{1}{N-1}}}{\cM^{ \frac{N}{N-1}}} \le \frac{\cM^{2} -1 }{\cM^{2}} \le  2   \left( \cM - 1 \right) ,
	\end{equation*}
	which plugged into \eqref{eq:intermedia} gives
	\begin{equation*}
		\left( \frac{ | \pa \Om | }{(N-2)^{N-1} \om_N } \right)^{\frac{2}{N-2}}	\frac{1}{| \Om |} \int_{\Om} \left[ P(0) - P(x) \right] dx + \left[ 1 - 2   \left( \cM - 1 \right)  \right] \cD(\Om) = 2   \left( \cM - 1 \right).
	\end{equation*} 
	Now we notice that we can assume 
	\begin{equation}\label{eq:small assumption}
		\cM - 1  \le \frac{1}{4},
	\end{equation}
	otherwise \eqref{eq:stability result} trivially holds true being as 
	$$\cF(\Om)\le 2 | B_1 | \le 8 | B_1 | \left( \cM - 1 \right) $$
	and noting that
	\begin{equation}\label{eq:M-1 = integral}
		\cM -1 = \frac{1}{| \pa\Om |} \int_{\pa\Om} \left( \cM - |\na u| \right) dS_x .
	\end{equation}
	Hence, we assume \eqref{eq:small assumption} and find that
	\begin{equation*}
		\left( \frac{ | \pa \Om | }{(N-2)^{N-1} \om_N } \right)^{\frac{2}{N-2}}	\frac{1}{| \Om |} \int_{\Om} \left[ P(0) - P(x) \right] dx + \frac{\cD(\Om)}{2} \le 2   \left( \cM - 1 \right) ,
	\end{equation*} 
	from which, recalling \eqref{eq:sup in P0}, we find that
	\begin{equation*}
		\cD(\Om) \le 4   \left( \cM - 1 \right) , 	
	\end{equation*} 
	and the desired result follows from \cite[Theorem 1.1]{FuscoMP} and \eqref{eq:M-1 = integral}.
	
	\medskip
	
	(The case $N=2$). The case $N=2$ is simpler, as it is entirely based on the isoperimetric inequality and does not require the use of a $P$ function. Being as $N=2$, by the harmonicity of $u$ in $\Om \setminus \left\lbrace 0 \right\rbrace$, we have that the vector field
	\begin{equation*}
		X:= 2 \langle x , \na u \rangle \na u - | \na u |^2 x 
	\end{equation*}
	is divergence free in $\Om \setminus \left\lbrace 0 \right\rbrace$.
	Hence, by the divergence theorem, we compute that
	$$
	\frac{| \pa\Om|^2}{\om_N} = \lim_{\ve \to 0} \int_{\pa B_\ve (0)} \langle X, \nu \rangle \, dS_x =
	\int_{\pa \Om} \langle X, \nu \rangle \, dS_x = \int_{\pa \Om} u_\nu^2 \,  \langle x, \nu \rangle \, dS_x ,
	$$ 
	where in the first identity we used \eqref{eq:asymptoticu} and in the last identity that $\na u = u_\nu \, \nu$ on $\pa\Om$. Subtracting $2 | \Om |$ from both sides and recalling \eqref{eq:isoperimetric deficit}, we immediately obtain
	$$2 | \Om | \, \cD(\Om) = \int_{\pa \Om} \left[ u_\nu^2 - 1 \right] \,  \langle x, \nu \rangle \, dS_x ,$$
	and hence, recalling the definition of $\cM$,
	\begin{equation*}
		\cD(\Om) \le \cM^2 - 1 \le 2 \cM \left( \cM - 1 \right).
	\end{equation*}
	As noticed before, we can assume \eqref{eq:small assumption} (otherwise \eqref{eq:stability result} trivially holds true) to find that
	$$
	\cD(\Om) \le \frac{5}{2} \left( \cM - 1 \right) ,
	$$
	and the desired result follows from \cite[Theorem 1.1]{FuscoMP} and \eqref{eq:M-1 = integral}.
\end{proof}

\begin{rem}\label{rem:generalizations and comparison with planar case}
	{\rm
		(i) Theorem~\ref{thm:stability delta} can be improved using \cite[Theorem 1.1]{FJ} instead of \cite[Theorem 1.1]{FuscoMP}. Doing so shows that Theorem~\ref{thm:stability delta} remains true with $\cF(\Om)$ replaced by the stronger asymmetry
		\begin{equation*}
			\cA(\Om):= \min_{z \in \RR^N} \left\lbrace \frac{| \Om \De B_r(z)|}{r^N} + \left(\frac{1}{r^{N-1}} \int_{\pa\Om} | \nu_\Om(x) - \nu_{B_r(z)}(\pi_{z,r}(x))|^2 \, dS_x   \right) \, : \, |\Om|=|B_r|  \right\rbrace ,
		\end{equation*}
		where $\pi_{z,r}$ is the projection of $\RR^N \setminus \left\lbrace z \right\rbrace$ onto 
		$\pa B_r(z)$, that is,
		\begin{equation*}
			\pi_{z,r}(x) := z + r \frac{x-z}{| x-z |} \quad \text{for all } x \neq z .
		\end{equation*}
		
		(ii) We mention that even though \cite{EPS} treats the problem for the $p$-Laplacian, for the sake of simplicity we decided to treat the case $p=2$ only. Nevertheless, using the same ideas and following \cite{EPS}, Theorem~\ref{thm:stability delta} can easily be extended to the case $2 \neq p \le N$.
		We mention that Theorem~\ref{thm:stability delta} may easily be further extended to the case of the anisotropic $p$-Laplacian, based on the anisotropic extension of \cite{EPS} provided in \cite{XY} and using \cite[Theorem 1.1]{FigalliMP} instead of \cite[Theorem 1.1]{FuscoMP}.
		
		(iii) As already mentioned, in the particular case $N=2$ rigidity and stability were achieved in \cite{AM1, AM2} with a completely different approach, which uses the theory of conformal mappings and is peculiar to the planar case.
		In \cite{AM1}, it is shown that a simply connected domain can be uniquely reconstructed from the knowledge of the normal derivative of its Green's function (with fixed pole). As a consequence, in \cite{AM2}, a stability estimate is obtained around any sufficiently smooth simply connected domain. In particular, in \cite[Theorem 1.1]{AM2}, which may be compared to
		Theorem~\ref{thm:stability delta} (for $N=2$), the closeness to a disk is measured by $\max_{\pa\Om}|x| - \min_{\pa\Om} |x|$, the deviation of $|\na u|$ from a reference constant is measured by a H\"older norm, and the resulting stability estimate is of Lipschitz type.
	}
\end{rem}

\begin{rem}
{\rm 
As already mentioned, Theorem \ref{thm:stability delta} 
makes evident that the overdetermined problem given by \eqref{eq:delta} under the Serrin-type condition $|\na u|=const.$ on $\pa\Om$ is essentially different to the classical Serrin problem, in terms of stability behaviour. In fact, when studying the stability for the classical Serrin problem ``bubbling'' phenomena may arise (see \cite[Theorem 1]{BNST}), and quantitative proximity of $\Om$ to a single ball can be achieved only under some uniform assumption preventing the bubbling (as in \cite[Theorem 1]{ABR}, \cite[Theorem 2]{BNST}, \cite[Theorem 1.1]{CMV}, \cite[Theorem 1]{Feldman} \cite[Theorem 1.1]{MP2}, \cite[Theorem 3.1]{MP3}, \cite[Theorem 4.4]{MP6}). On the contrary, Theorem~\ref{thm:stability delta} shows that, in terms of stability and hence bubbling phenomena, Problem \eqref{eq:delta} behaves like the isoperimetric problem, and, in fact, 
proximity to a single ball is achieved with a constant $C(N)$ that only depends on the dimension.
}
\end{rem}

\section{Weighted Poincar\'e inequalities}\label{sec:Poincare}
\subsection{Notation and known results}

In what follows, for a set $G \subset \RR^N$ and a function $v: G \to \RR$, $v_G$ denotes the {\it mean value of $v$ in $G$}, that is
$$
v_G= \frac{1}{|G|} \, \int_G v \, dx.
$$
Also, denoting with $\de_{\Ga_0} (x)$ the distance of a point $x$ in $G$ to $\Ga_0 \subseteq \pa G$, for a function $v:\Om \to \RR$ we define
$$
\nr \de_{\Ga_0}^\al \, \na v \nr_{L^p (G)} = \left( \sum_{i=1}^N \nr \de_{\Ga_0}^\al \,  v_i \nr_{L^p (G)}^p \right)^\frac{1}{p}
$$
and
$$
\nr \de_{\Ga_0}^\al \, \na^2 v \nr_{L^p (G)} = \left( \sum_{i,j=1}^N \nr \de_{\Ga_0}^\al \, v_{ij} \nr_{L^p (G)}^p \right)^\frac{1}{p},
$$
for $0 \le \al \le 1$ and $p \in [1, \infty)$.

The following weighted Poincar\'{e}-type identity is due to Boas and Straube (\cite{BS}).

\begin{lem}[\cite{BS}]
	\label{lem:BoasStraube}
	Let $G \subset\RR^N$, $N\ge 2$, be a bounded domain with boundary $\pa G$ of class $C^{0,\al}$,
	$0 \le \al \le 1$, and consider $p \in \left[ 1, \infty \right)$. Then, there exists a positive constant, $\ol{\mu}_{p, \al} (G)$ such that
	\begin{equation}
		\label{eq:BoasStraube-poincare}
		\nr v - v_G \nr_{L^p(G)} \le \ol{\mu}_{p, \al} (G)^{-1} \nr \de_{\pa G}^{\al} \, \na v  \nr_{L^p(G)},
	\end{equation}
	for every function $v \in L^p (G) \cap W^{1,p}_{loc} (G)$.
	\par
	In particular, if $G$ has a Lipschitz boundary, the number $\al$ can be replaced by any exponent in $[0,1]$. 
\end{lem}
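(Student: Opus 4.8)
\emph{Proof idea.} I would argue by contradiction, reducing the statement to a compactness fact in the interior and a tightness estimate near $\pa G$ that rests on a one–dimensional weighted Hardy inequality. Suppose the asserted inequality failed for every choice of the constant on the right. Then, after subtracting the mean and normalizing (and discarding the trivial case of an a.e.\ constant $v$, for which both sides vanish), there would exist a sequence $v_n\in L^p(G)\cap W^{1,p}_{loc}(G)$ with $(v_n)_G=0$, $\nr v_n\nr_{L^p(G)}=1$, and $\nr\de_{\pa G}^{\al}\,\na v_n\nr_{L^p(G)}\to 0$. The aim is to show $v_n\to 0$ in $L^p(G)$, which contradicts $\nr v_n\nr_{L^p(G)}=1$.

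\emph{Interior.} On every compact $K\Subset G$ one has $\de_{\pa G}\ge c_K>0$, hence $\nr\na v_n\nr_{L^p(K)}\le c_K^{-\al}\nr\de_{\pa G}^{\al}\,\na v_n\nr_{L^p(G)}\to 0$, while $\nr v_n\nr_{L^p(K)}\le 1$. By Rellich--Kondrachov together with a diagonal argument along an exhaustion $K_1\Subset K_2\Subset\cdots$ of $G$, a subsequence (not relabelled) converges in $L^p_{loc}(G)$ to a function with vanishing gradient, hence, since $G$ is connected, to a constant $c$.

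\emph{Boundary tightness -- the crux.} It remains to upgrade this to convergence in $L^p(G)$; equivalently, writing $w_n:=v_n-c$, to prove $\nr w_n\nr_{L^p(G)}\to 0$ given that $w_n\to 0$ in $L^p_{loc}(G)$, that $\nr w_n\nr_{L^p(G)}$ is bounded, and that $\nr\de_{\pa G}^{\al}\,\na w_n\nr_{L^p(G)}\to 0$. Cover $\pa G$ by finitely many charts in which, after a rotation, $G$ agrees with the subgraph $\{x_N>\phi(x')\}$, $\phi$ of class $C^{0,\al}$ with Hölder constant $L$. The geometric point is that for $x=(x',\phi(x')+t)\in G$ with $t$ small every candidate foot point $(y',\phi(y'))$ satisfies $|x-(y',\phi(y'))|\ge\min\{t/2,\,(t/(2L))^{1/\al}\}$ — split according to whether $L|x'-y'|^{\al}\le t/2$ or not — so that $\de_{\pa G}(x)\ge c_0\,t^{1/\al}$ and hence $\de_{\pa G}(x)^{-\al}\le c_0^{-\al}\,t^{-1}$. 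Using, for a.e.\ $x'$ and $0<t<h<2h_0$,
\[
w_n(x',\phi(x')+t)=w_n(x',\phi(x')+h)-\int_t^{h}\pa_N w_n(x',\phi(x')+\tau)\,d\tau ,
\]
averaging in $h\in(h_0,2h_0)$, raising to the $p$-th power and integrating in $(x',t)$ over $B'\times(0,h_0)$, the boundary–value term is dominated (by Jensen) by $\int_K|w_n|^p$ over the fixed compact set $K=\{(x',\phi(x')+h):x'\in B',\,h_0<h<2h_0\}\Subset G$, which tends to $0$; the gradient term is handled through $|\pa_N w_n|=\de_{\pa G}^{-\al}(\de_{\pa G}^{\al}|\pa_N w_n|)\le c_0^{-\al}\tau^{-1}(\de_{\pa G}^{\al}|\pa_N w_n|)$ followed by the weighted Hardy inequality $\int_0^{2h_0}\bigl(\int_t^{2h_0}\tau^{-1}g(\tau)\,d\tau\bigr)^p dt\le C_p\int_0^{2h_0}g(\tau)^p\,d\tau$, whose validity reduces (Muckenhoupt--Hardy criterion) to the finiteness of $\sup_{0<r<2h_0}r^{1/p}\bigl(\int_r^{2h_0}\tau^{-p'}d\tau\bigr)^{1/p'}$ — which holds because $r^{1/p}\,r^{(1-p')/p'}=1$, with an immediate Fubini argument replacing this when $p=1$. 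Consequently the gradient term is bounded by a constant times $\nr\de_{\pa G}^{\al}\,\na w_n\nr_{L^p(G)}^p\to 0$. Summing over the finitely many charts (whose collars cover $\{x\in G:\de_{\pa G}(x)<\eta\}$ for some $\eta>0$) and adding $\int_{\{\de_{\pa G}\ge\eta\}}|w_n|^p\to 0$ gives $\nr w_n\nr_{L^p(G)}\to 0$.

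\emph{Conclusion.} Thus $v_n\to c$ in $L^p(G)$; since $G$ is bounded, means converge, whence $c=\lim(v_n)_G=0$, contradicting $\nr v_n\nr_{L^p(G)}=1$. Therefore the best constant in $\nr v-v_G\nr_{L^p(G)}\le C\,\nr\de_{\pa G}^{\al}\,\na v\nr_{L^p(G)}$ is finite (and, testing on a nonconstant affine function, positive), and one defines $\ol{\mu}_{p,\al}(G)$ to be its reciprocal. The final assertion follows at once, since a bounded Lipschitz domain is of class $C^{0,\al}$ for every $\al\in[0,1]$. The principal obstacle throughout is the boundary-tightness step, and in particular the fact that the exponent of the weight must match the Hölder exponent of $\pa G$ precisely so that the above Hardy inequality remains available — a larger weight exponent would make the corresponding Muckenhoupt quantity diverge.
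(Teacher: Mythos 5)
The paper does not actually prove this lemma — it is stated with a citation to Boas--Straube \cite{BS} — and the closest the paper comes to outlining the argument is in its proof of Theorem~\ref{thm:Poincare new in general}, where the author describes the \cite{BS} method as ``combining Hardy's inequality with a standard compactness argument that goes back at least to Morrey'' and cites Kufner's book for the needed Hardy estimate $\nr \vV\nr_{L^p(G)}\le C(\nr\de_{\pa G}^\al D\vV\nr_{L^p(G)}+\nr\vV\nr_{L^p(\omega)})$, $\omega\Subset G$, as a black box. Your proposal follows exactly this template but is more self-contained and, in effect, reconstructs the \cite{BS} argument rather than delegating it: you fold the Hardy estimate and the compactness step into a single normalized contradiction sequence, and you supply an actual proof of the boundary Hardy bound via charts, the pointwise lower bound $\de_{\pa G}(x)\ge c_0\,t^{1/\al}$ at graph-height $t$ (obtained by the split $L|x'-y'|^\al\lessgtr t/2$), the resulting weight bound $\de_{\pa G}^{-\al}\le c_0^{-\al}\,t^{-1}$, the FTC representation averaged over $h\in(h_0,2h_0)$, and a one-dimensional conjugate Hardy inequality justified through the Muckenhoupt criterion (Fubini when $p=1$). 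Your closing remark that the weight exponent has to equal the H\"older exponent of $\pa G$ for the Muckenhoupt quantity to stay finite is precisely the structural point of this lemma. Two details worth flagging, neither fatal: the lower bound $\de_{\pa G}(x)\ge c_0\,t^{1/\al}$ tacitly assumes that the nearest boundary point falls inside the chart, which requires shrinking the collar thickness relative to the chart radius and should be stated; and the expressions $t^{1/\al}$, $c_0^{-\al}t^{-1}$ and the Hardy step degenerate at $\al=0$, where the statement is the unweighted Poincar\'e inequality and the weighted Hardy argument should be replaced by a plain H\"older estimate in $\tau$.
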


\begin{rem}
	{\rm
		When $\al=0$ we understand the boundary of $G$ to be locally the graph of a continuous function. 
	}
\end{rem}

We now present a simple lemma which will be useful in the sequel to manipulate the left-hand side of Poincar\'e-type inequalities.

\begin{lem}\label{lem:mediaor-mediauguale}
	Let $G$ be a domain with finite measure and let $F \subseteq G$ be a set of positive measure. If $v \in L^p(G)$, then for every $\la \in \RR$
	\begin{equation}\label{eq:mediaor-mediauguale}
		\nr v - v_F \nr_{L^p(G) } \le \left[ 1+ \left( \frac{|G|}{|F|} \right)^{\frac{1}{p}} \right] \, \nr v - \la \nr_{L^p (G)}.
	\end{equation}
\end{lem}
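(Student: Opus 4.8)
The plan is to prove \eqref{eq:mediaor-mediauguale} by a direct application of the triangle inequality in $L^p(G)$, inserting the constant $\la$ as an intermediate term, and then estimating the resulting constant-difference term $\nr v_F - \la\nr_{L^p(G)}$ in terms of $\nr v - \la\nr_{L^p(G)}$ via Jensen's (or Hölder's) inequality over $F$.

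First I would write
\[
\nr v - v_F \nr_{L^p(G)} \le \nr v - \la \nr_{L^p(G)} + \nr \la - v_F \nr_{L^p(G)},
\]
so that it suffices to bound the second summand. Since $v_F - \la$ is a constant, $\nr v_F - \la\nr_{L^p(G)} = |G|^{1/p}\,|v_F - \la|$. Next I would express $v_F - \la = \frac{1}{|F|}\int_F (v - \la)\, dx$ and apply Jensen's inequality (equivalently Hölder with exponents $p$ and $p/(p-1)$) on the probability space $(F, |F|^{-1}dx)$ to get
\[
|v_F - \la| \le \frac{1}{|F|}\int_F |v - \la|\, dx \le \Big(\frac{1}{|F|}\int_F |v-\la|^p\, dx\Big)^{1/p} = |F|^{-1/p}\,\nr v - \la\nr_{L^p(F)}.
\]
Combining these and using $\nr v - \la\nr_{L^p(F)} \le \nr v - \la\nr_{L^p(G)}$ (valid since $F\subseteq G$) yields
\[
\nr \la - v_F \nr_{L^p(G)} \le \Big(\frac{|G|}{|F|}\Big)^{1/p}\,\nr v - \la\nr_{L^p(G)},
\]
and substituting back into the triangle inequality gives \eqref{eq:mediaor-mediauguale}.

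There is essentially no obstacle here: the finiteness of $|G|$ guarantees constants lie in $L^p(G)$, and the positivity of $|F|$ makes the averaging legitimate; the only point worth a line of care is that $v\in L^p(G)$ ensures $v\in L^p(F)$ so that $v_F$ is well-defined and all quantities are finite. If one wants to be slightly more self-contained one can replace the appeal to Jensen by a direct Hölder estimate $\int_F |v-\la|\,dx \le |F|^{1-1/p}\,\nr v-\la\nr_{L^p(F)}$, which gives the same bound. The argument is dimension-free and does not use any regularity of $\pa G$.
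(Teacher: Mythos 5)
Your proof is correct and follows essentially the same route as the paper's: H\"older (or Jensen) applied to the average over $F$ to bound $|v_F-\la|$ by $|F|^{-1/p}\,\nr v-\la\nr_{L^p(F)}$, then the fact that $v_F-\la$ is constant to convert this to an $L^p(G)$ bound, and finally the triangle inequality. The only cosmetic difference is the order in which you invoke the triangle inequality; the substance is identical.
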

\begin{proof}
	By H\"older's inequality, we have that 
	\begin{equation*}
		|v_F - \la| \le \frac1{|F|} \int_F |v-\la| \, dx \le 
		|F|^{-1/p} \nr v - \la \nr_{L^p(F)}. 
	\end{equation*}
	Since $|v_F - \la|$ is constant, we then infer that
	$$
	\nr v_F-\la\nr_{L^p(G)}=|G|^{1/p} |v_F - \la| \le \left( \frac{|G|}{|F|} \right)^{\frac{1}{p}} \, \nr v- \la \nr_{L^p (G)}.
	$$
	Thus, \eqref{eq:mediaor-mediauguale} follows by an application of the triangular inequality.
\end{proof}

When $p(1 -\al)<N$, a strengthened Sobolev-Poincar\'e version of inequality \eqref{eq:BoasStraube-poincare} can be obtained, as shown in the next lemma, which is a reformulation of a result obtained by Hurri-Syrj\"anen in \cite{HS}, which, in turn, was stimulated by \cite{BS}.

\begin{lem}\label{lem:Hurri}
	Let $G \subset \RR^N$ be a bounded $b_0$-John domain, and consider three numbers $r, p, \al$ such that 
	\begin{equation}\label{eq:r p al in Hurri}
		1 \le p \le r \le \frac{Np}{N-p(1 - \al )} , \quad p(1 - \al)<N , 
		\quad 0 \le \al \le 1 .
	\end{equation}
	Then, there exists a positive constant $\ol{\mu}_{r, p, \al} (G)$ such that
	\begin{equation}
		\label{eq:John-Hurri-poincare}
		\nr v - v_G \nr_{L^r (G)} \le \ol{\mu}_{r, p, \al} (G)^{-1} \nr \de_{\pa G}^{\al} \, \na v  \nr_{L^p(G)},
	\end{equation}
	for every function $v\in L^1_{loc}(G)$ such that $\de_{\pa G}^{\al} \, \na v \in L^p (G)$ .
\end{lem}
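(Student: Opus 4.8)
The plan is to obtain Lemma~\ref{lem:Hurri} directly from the weighted Sobolev--Poincar\'e inequality for John domains proved by Hurri-Syrj\"anen in \cite{HS}, the only real work being to reconcile the normalization of the left-hand side and the bookkeeping of the distance weight. Concretely, for a bounded $b_0$-John domain $G$ and exponents $r,p,\al$ in the range \eqref{eq:r p al in Hurri}, \cite{HS} furnishes a constant $C=C(N,b_0,r,p,\al)>0$ such that
\begin{equation}\label{eq:HS-original}
	\inf_{c\in\RR}\nr v-c\nr_{L^r(G)}\le C\,\nr\de_{\pa G}^{\al}\,\na v\nr_{L^p(G)}
\end{equation}
for every $v\in L^1_{loc}(G)$ with $\de_{\pa G}^{\al}\,\na v\in L^p(G)$. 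The admissibility constraints in \eqref{eq:r p al in Hurri} are exactly those dictated by scaling: multiplying $|\na v|$ by $\de_{\pa G}^{\al}$ effectively lowers the dimension from $N$ to $N-\al p$, so the associated Sobolev exponent is $\tfrac{Np}{N-p+\al p}=\tfrac{Np}{N-p(1-\al)}$; in particular $\al=0$ reproduces the classical Sobolev--Poincar\'e inequality, while $\al=1$ forces $r=p$ and recovers Lemma~\ref{lem:BoasStraube}.

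Granting \eqref{eq:HS-original}, I would first note that its right-hand side is finite by hypothesis, hence so is the left-hand side, and, $G$ being bounded, $v\in L^r(G)$. It then remains to replace the optimal constant $c$ by the mean value $v_G$: this is precisely Lemma~\ref{lem:mediaor-mediauguale} applied with $F=G$, with $p$ there replaced by $r$ (admissible since $r\ge p\ge 1$), and with $\la$ taken to be a constant $c$ (nearly) realizing the infimum in \eqref{eq:HS-original}. That lemma yields $\nr v-v_G\nr_{L^r(G)}\le 2\,\nr v-c\nr_{L^r(G)}$, whence
\begin{equation*}
	\nr v-v_G\nr_{L^r(G)}\le 2C\,\nr\de_{\pa G}^{\al}\,\na v\nr_{L^p(G)},
\end{equation*}
that is, \eqref{eq:John-Hurri-poincare} holds with $\ol{\mu}_{r,p,\al}(G)^{-1}:=2C$. (If the version of \cite{HS} one chooses to cite already carries $v_G$ on the left, this last step is simply omitted.)

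The single point requiring care --- and, in that sense, the main obstacle, since no new mathematics is involved --- is the faithful transcription of \cite{HS}: one must check that the distance weight used there is the power $\de_{\pa G}^{\al}$ multiplying $|\na v|$ (equivalently $\de_{\pa G}^{\al p}$ inside the $L^p$ integral), that the exponent pair $(r,p)$ is admissible for John domains up to and including the endpoint $r=\tfrac{Np}{N-p(1-\al)}$, and that the function class there matches $\{v\in L^1_{loc}(G):\ \de_{\pa G}^{\al}\,\na v\in L^p(G)\}$. All the substantive analysis --- the Boman/chaining decomposition of $G$ and the ensuing weighted integral estimate that produces \eqref{eq:HS-original} --- is carried out in \cite{HS}; it could instead be reproved from scratch by combining such a chaining decomposition with weighted estimates for the resulting Riesz-potential-type operator, but at greater length and with no gain.
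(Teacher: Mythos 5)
Your proposal is correct and follows essentially the same route as the paper: the paper cites \cite[Theorem 1.3]{HS}, which is stated with the $r$-mean $v_{r,G}$ (the minimizer of $\inf_{\la}\nr v-\la\nr_{L^r(G)}$, i.e., your $c$) on the left, and then applies Lemma~\ref{lem:mediaor-mediauguale} with $F=G$ exactly as you do to pass to $v_G$. The only cosmetic difference is that you phrase the \cite{HS} estimate via an infimum and a near-minimizer rather than via the $r$-mean, which is the same thing.
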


The class of John domains is huge: it contains Lipschitz domains, but also very irregular domains with fractal boundaries as, e.g., the Koch snowflake.
Roughly speaking, a domain is a $b_0$-John domain if it is possible to travel from one point of the domain to another without going too close to the boundary.
The formal definition is the following: a domain $G$ in $\RR^N$ is a {\it $b_0$-John domain}, $b_0 \ge 1$, if each pair of distinct points $a$ and $b$ in $G$ can be joined by a curve $\ga: \left[0,1 \right] \rightarrow G$ such that
%
%
\begin{equation*}
	\de_{ \pa G} (\ga(t)) \ge b_0^{-1} \min{ \left\lbrace |\ga(t) - a|, |\ga(t) - b| \right\rbrace  }.
\end{equation*}
The notion could be also defined through the so-called {\it $b_0$-cigar} property (see \cite{Va}).

We mention that the proof in \cite{HS}
%
%
allows to obtain explicit upper bounds for $\ol{\mu}_{r, p, \al} (G)^{-1}$ in terms of ($N$, $r$, $p$, $\al$,) $b_0$ and $|G|$ only.
\begin{proof}[Proof of Lemma \ref{lem:Hurri}]
	In \cite[Theorem 1.3]{HS} it is proved that there exists a constant $c=c(N,\, r, \, p,\, \al, \, G)$ such that
	\begin{equation}\label{eq:risultatodiHSconr-media}
		\nr v - v_{r, G} \nr_{L^r (G)} \le c \, \nr \de_{\pa G}^{\al} \, \na v  \nr_{L^p (G)} ,
	\end{equation}
	for every $v \in L^1_{loc}(G)$ such that $\de_{ \pa G}^\al \, \na v  \in L^p(G)$.
	Here, $v_{r,G}$ denotes the {\it $r$-mean of $v$ in $G$} which is defined -- following \cite{IMW} -- as the unique minimizer of the problem
	$$\inf_{\la \in \RR} \nr v - \la \nr_{L^r(G)}.$$
	Notice that, in the case $r=2$, $v_{2, G }$ is the classical mean value of $v$ in $G$, i.e. $v_{2, G } = v_G$,
	as can be easily verified.
	
	Using Lemma \ref{lem:mediaor-mediauguale} with $F= G$ and $\la = v_{r, G}$, \eqref{eq:John-Hurri-poincare} follows.
\end{proof}

\subsection{Weighted Poincar\'e-type inequalities for vector fields}\label{subsec:Poincare-type}
In the present subsection, we provide new weighted Poincar\'e-type inequalities for vector fields. Their strengthened Sobolev-Poincar\'e versions will be presented later in Subsection~\ref{subsec:SobolevPoincare versions}. 
\begin{thm}\label{thm:Poincare new in general}
	Given $1 \le p < +\infty$ and $0 \le \al < (p-1)/p$, let $G \subset \RR^N$ be a bounded Lipschitz
	domain. 
	Let $A \subseteq \pa G$ be a relatively open subset of $\pa G$ with positive $N-1$-dimensional measure and such that the (exterior) unit normal vector $\nu(x)$ is continuous on $A$. 
	
	Then, there exists a positive constant $ \eta_{p, \al}(A , G)$
	such that
	\begin{equation}\label{eq:Poincare new RN}
		\nr \vV \nr_{L^p(G)} \le \eta_{p, \al}(A , G)^{-1} \, \nr \de_{ \pa G}^\al D \vV \nr_{L^p(G)} ,
	\end{equation}
	for any vector function
	$$\vV: G \to \mathrm{span}_{x \in A} \, \nu (x) \subseteq \RR^N$$
	belonging to $W^{1,p}_{\al} (G)$
	such that $\langle \vV , \nu \rangle = 0$ a.e. on $A$. Here, $W^{1,p}_{\al} (G)$ denotes the weighted Sobolev space with norm given by $ \nr \vV \nr_{L^p(G)} + \nr \de_{\pa G}^\al D \vV \nr_{L^p(G)} $.
\end{thm}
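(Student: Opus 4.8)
The plan is to reduce the vector inequality \eqref{eq:Poincare new RN} to the scalar Boas--Straube inequality \eqref{eq:BoasStraube-poincare} applied componentwise. Write $V := \mathrm{span}_{x \in A} \nu(x)$ and let $k := \dim V$; fix an orthonormal basis $e_1, \dots, e_k$ of $V$, so that $\vV = \sum_{i=1}^k v^i e_i$ with scalar components $v^i : G \to \RR$. Each $v^i$ lies in $L^p(G) \cap W^{1,p}_{loc}(G)$ (indeed in the weighted space), so Lemma~\ref{lem:BoasStraube} gives
\begin{equation*}
	\nr v^i - (v^i)_G \nr_{L^p(G)} \le \ol{\mu}_{p,\al}(G)^{-1} \nr \de_{\pa G}^\al \na v^i \nr_{L^p(G)} \le \ol{\mu}_{p,\al}(G)^{-1} \nr \de_{\pa G}^\al D\vV \nr_{L^p(G)}
\end{equation*}
for each $i$, where the last step uses $|\na v^i| \le |D\vV|$ pointwise (the $e_i$ are constant vectors). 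Summing the $p$-th powers over $i$, we control $\nr \vV - \mathbf{c} \nr_{L^p(G)}$ where $\mathbf{c} := \sum_i (v^i)_G e_i \in V$ is the vector of componentwise means. Thus the whole content of the theorem is the claim that the constant vector $\mathbf{c}$ cannot be too large compared to $\nr \de_{\pa G}^\al D\vV \nr_{L^p(G)}$; equivalently, that a \emph{nonzero} constant vector field in $V$ cannot satisfy $\langle \vV, \nu\rangle = 0$ a.e. on $A$, and moreover that this rigidity is quantitative.

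The key algebraic observation is: if $\mathbf{c} \in V = \mathrm{span}_{x\in A}\nu(x)$ and $\langle \mathbf{c}, \nu(x)\rangle = 0$ for a.e.\ (hence, by continuity of $\nu$ on $A$, for every) $x \in A$, then $\mathbf{c} \perp \nu(x)$ for all $x\in A$, so $\mathbf{c} \perp \mathrm{span}_{x\in A}\nu(x) = V$; since also $\mathbf{c}\in V$, we get $\mathbf{c} = 0$. So the rigidity holds at the qualitative level for free. To make it quantitative, define the bilinear/contraction constant
\begin{equation*}
	\tau(A) := \inf\left\{ \frac{1}{|A|}\int_A \langle \mathbf{w}, \nu(x)\rangle^2 \, dS_x \ :\ \mathbf{w}\in V,\ |\mathbf{w}| = 1 \right\}.
\end{equation*}
By compactness of the unit sphere in the finite-dimensional space $V$ and continuity of $\nu$ on $A$, the infimum is attained, and it is strictly positive precisely by the qualitative rigidity just proved. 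Then for the actual component means $\mathbf{c}$,
\begin{equation*}
	\tau(A)\, |\mathbf{c}|^2 \le \frac{1}{|A|}\int_A \langle \mathbf{c}, \nu(x)\rangle^2\, dS_x = \frac{1}{|A|}\int_A \langle \mathbf{c} - \vV(x), \nu(x)\rangle^2\, dS_x \le \frac{1}{|A|}\int_A |\mathbf{c} - \vV|^2\, dS_x,
\end{equation*}
using $\langle \vV,\nu\rangle = 0$ a.e.\ on $A$ in the middle step.

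It remains to bound the boundary integral $\int_A |\vV - \mathbf{c}|^2\, dS_x$ by the interior quantity $\nr \de_{\pa G}^\al D\vV\nr_{L^p(G)}$ (and $\nr\vV-\mathbf{c}\nr_{L^p(G)}$, which we already control). This is where I expect the main technical obstacle: one needs a \emph{weighted trace inequality} on the portion $A$ of a Lipschitz boundary, of the form $\nr w \nr_{L^{p}(A)} \le C\big(\nr w\nr_{L^p(G)} + \nr \de_{\pa G}^\al \na w \nr_{L^p(G)}\big)$, valid under the hypothesis $0 \le \al < (p-1)/p$ (equivalently $\al p < p-1$, i.e.\ $\al p' < 1$ where $p'$ is the conjugate exponent) — this is exactly the regime in which $\de_{\pa G}^{-\al} \in L^{p'}$ near the boundary, making the weighted trace finite; compare the classical Hardy-trace results. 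Applying this with $w = v^i - (v^i)_G$ and summing over components bounds $\nr \vV - \mathbf{c}\nr_{L^p(A)}$, hence (since $L^2(A)\hookrightarrow$-type control follows, or one works with $L^p$ throughout and uses Hölder on the finite-measure set $A$) the quantity $\frac{1}{|A|}\int_A|\vV-\mathbf{c}|^2\,dS_x$. Chaining the three displays — Boas--Straube for $\nr\vV-\mathbf{c}\nr_{L^p(G)}$, the weighted trace for $\nr\vV-\mathbf{c}\nr_{L^p(A)}$, and the coercivity estimate $\tau(A)|\mathbf{c}|^2 \le \frac{1}{|A|}\int_A|\vV-\mathbf{c}|^2$ — and finally the triangle inequality $\nr\vV\nr_{L^p(G)} \le \nr\vV-\mathbf{c}\nr_{L^p(G)} + |G|^{1/p}|\mathbf{c}|$ yields \eqref{eq:Poincare new RN} with $\eta_{p,\al}(A,G)^{-1}$ an explicit combination of $\ol{\mu}_{p,\al}(G)^{-1}$, the trace constant, $\tau(A)^{-1/2}$, $|G|$ and $|A|$. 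The one point to double-check is that the trace constant genuinely requires only $\al < (p-1)/p$ and Lipschitz regularity of $\pa G$ (not $C^{0,\al}$), which matches the theorem's hypotheses; I would either cite a standard weighted-trace reference or give a short self-contained proof via a partition of unity flattening $A$ and a one-dimensional Hardy inequality in the normal direction.
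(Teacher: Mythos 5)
Your proof is correct in substance but takes a genuinely different route from the paper. The paper proves \eqref{eq:Poincare new RN} by contradiction and compactness: it first applies a Hardy inequality from Kufner to reduce to an interior estimate $\nr \vV \nr_{L^p(\omega)} \le C \nr \de_{\pa G}^\al D\vV\nr_{L^p(G)}$ on a compactly contained smooth $\omega$, then assumes this fails, extracts a normalized sequence $\vV_k$ converging (via Rellich--Kondrachov) to a nonzero constant $\vV_0$, and uses the weighted trace embedding $W^{1,p}_\al(G)\hookrightarrow L^p(\pa G)$ (valid precisely for $\al<(p-1)/p$, citing Kufner, Thm.\ 9.15) to pass the boundary condition $\langle\vV_k,\nu\rangle=0$ to the limit; then continuity of $\nu$ on $A$ together with $\vV_0\in\spn_{x\in A}\nu(x)$ forces $\vV_0=0$, a contradiction. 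By contrast, you give a \emph{constructive} argument: apply Boas--Straube componentwise to control $\nr\vV-\mathbf{c}\nr_{L^p(G)}$ for the mean vector $\mathbf{c}\in V$, introduce the quantitative non-degeneracy constant $\tau(A)$, show $\tau(A)>0$ from the same continuity$+$span reasoning, and then use a weighted trace inequality to close the loop. The payoff of your route is explicit constants for all $k=\dim V$, whereas the paper only observes (Remark following Theorem \ref{thm:Poincare new in general}) that explicit estimates can be extracted in the scalar case $k=1$; you have essentially isolated the right geometric quantity $\tau(A)$ to make that remark work for arbitrary $k$. Both proofs use the weighted trace theorem and both use the span condition together with continuity of $\nu$, so the hypotheses enter at the same places.

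Two small things to tighten. First, your coercivity step is written with $L^2(A)$ norms while the trace gives $L^p(A)$; for $p\ge 2$ H\"older on the finite-measure set $A$ bridges the gap, but for $p<2$ it goes the wrong way, so you should simply define
\begin{equation*}
\tau_p(A):=\inf\left\{\frac{1}{|A|}\int_A |\langle\mathbf{w},\nu(x)\rangle|^p\,dS_x : \mathbf{w}\in V,\ |\mathbf{w}|=1\right\},
\end{equation*}
which is positive by the same compactness$+$continuity argument, and run the whole chain in $L^p$. Second, the weighted trace inequality you invoke is indeed standard in the regime $\al p<p-1$ on Lipschitz domains (and is precisely the reference Kufner used in the paper's own proof), so you should cite it rather than re-derive it; your remark about $\de_{\pa G}^{-\al}\in L^{p'}_{\mathrm{loc}}$ near $\pa G$ correctly identifies why the threshold is $\al<(p-1)/p$.
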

	\begin{proof}
		By following \cite{BS}, the proof results combining Hardy's inequality with a standard compactness argument that goes back at least to Morrey \cite{Morrey}. As in \cite{BS}, the Hardy inequality that we need is the following, which can be deduced by \cite{Kufner}: there exists a smoothly bounded relatively compact subdomain $\omega$ of $G$ such that
		\begin{equation}\label{eq:1_PoincareBS}
			\nr \vV \nr_{L^p( G )} \le C \left( \nr \de_{\pa G}^\al D \vV \nr_{L^p(G)} + \nr \vV \nr_{L^p( \omega )} \right),
		\end{equation}
		for every locally integrable $\vV$.
		Hence, to prove the theorem we only need to show that
		\begin{equation}\label{eq:ineq intermedia per Poinc pesata}
		\nr \vV \nr_{L^p( \omega )} \le C \nr \de_{ \pa G}^\al D \vV \nr_{L^p( G )}.
		\end{equation}
		Suppose this estimate to be false. Then there would exist a sequence $\left\lbrace \vV_k \right\rbrace$ contained in 
		$$
		\left\lbrace \vV: G \to \mathrm{span}_{x \in A} \, \nu (x) \subseteq \RR^N, \, \vV\in W^{1,p}_\al (G) , \, \langle \vV , \nu \rangle = 0 \, \text{ a.e. on } A \right\rbrace
		$$
		such that
		\begin{equation}\label{eq:2_PoincareBS}
			\nr \vV_k \nr_{ L^p (\omega)} = 1
		\end{equation}
		and
		\begin{equation}\label{eq:3_PoincareBS}
			\nr \de_{ \pa G }^\al D \vV_k \nr_{L^p(G)} < 1/k \quad \text{for each } k .
		\end{equation}
		In particular, the $\vV_k$ form a bounded sequence in $W^{1,p}(\om)$, which embeds compactly in $L^p(\omega)$ by the Rellich-Kondrachov theorem (see, e.g., \cite[Theorem 6.2]{Adams}).
		By passing to a subsequence we may assume that the $\vV_k$ converge in $L^p (\omega)$ to some limit $\vV_0$, and
		in view of \eqref{eq:1_PoincareBS} and \eqref{eq:3_PoincareBS} the convergence even takes place in $W^{1,p}_{\al} (G)$.
		But $\nr \vV_0 \nr_{W^{1,p} ( \om )} = 1$ by \eqref{eq:2_PoincareBS}, and $D \vV_0$ vanishes identically (a.e. in $G$) by \eqref{eq:3_PoincareBS}. Hence, $\vV_0$ is a nonzero constant vector in $G$.

		The above argument works for any $0 \le \al \le 1$ provided that the boundary $\pa G$ of $G$ is of class $C^{0,\al}$. 
		Let us now exploit the stronger assumption on $\al$ in Theorem~\ref{thm:Poincare new in general}.
		
		In fact, being as $0 \le \al < (p-1)/p$, by the trace theorem for weighted Sobolev spaces (see, e.g., \cite[Theorem 9.15]{Kufner}) we have that
		\begin{equation}\label{eq:4_PoincareBS}
		\vV_k \to \vV_0 \text{ in } L^p(\pa G) \supseteq L^p(A), \quad \text{ where } \vV_0 \text{ is a nonzero constant vector}.
		\end{equation}
		
		We can now use the assumption $\langle \vV_k, \nu \rangle = 0$ a.e. on $A$, to find a contradiction.
		In fact, by using such assumption 
		%
		%
		we have that
		\begin{equation}\label{eq:serve solo per alternativa bis}
	    | \langle \vV_0, \nu(x) \rangle | = |\langle \vV_k(x) - \vV_0, \nu(x) \rangle| \le  |\vV_k(x) - \vV_0|  
	    \end{equation}
	    for a.e. $x$ in $A$,
		and hence $\langle \vV_0, \nu \rangle = 0$ a.e. on $A$.

		By continuity of $\nu$ on $A$, we actually have that
		\begin{equation}\label{eq:provamethodPoincare}
			\langle \vV_0, \nu \rangle = 0 \quad \text{everywhere in } A .
		\end{equation}
		Since $\vV_0 \in \mathrm{span}_{x \in A} \nu (x)$,
		\eqref{eq:provamethodPoincare} gives that $\vV_0$ must be the zero vector. This contradicts \eqref{eq:4_PoincareBS} and proves the theorem.
	\end{proof}

\begin{rem}
	{\rm
		
			(i) In the particular case where
			\begin{equation}\label{eq:condition Poincare span RN}
				\mathrm{span}_{x \in A} \, \nu (x) = \RR^N ,
			\end{equation}
			inequality \eqref{eq:Poincare new RN} holds true for any vector function $\vV: G \to \RR^N$ belonging to $W^{1,p}_{\al} (G)$ such that $\langle \vV , \nu \rangle = 0$ a.e. on $A$.

			We notice that a point of strict convexity in $A$ is sufficient to guarantee the validity of \eqref{eq:condition Poincare span RN}.

(ii) When the dimension of $\mathrm{span}_{x \in A} \, \nu (x)$ equals $1$, Theorem \ref{thm:Poincare new in general} reduces to a weighted Poincar\'e-type inequality for scalar valued functions with zero trace on a subset of the boundary with positive $N-1$-dimensional measure.
In this case, the constant $\eta_{p, \al}(A , G)$ can be explicitly estimated in terms of $N$, $p$, $\al$, $|G|/|A|$ (where $|G|$ and $|A|$ denote the $N$-dimensional measure of $G$ and the $N-1$-dimensional measure of $A$), the constant $\ol{\mu}_{p, \al}(G)^{-1}$ in the Poincar\'e inequality \eqref{eq:BoasStraube-poincare} and the constant $\la_{p,\al}(A)$ of the trace inequality of the embedding $W^{1,p}_{\al}(G) \hookrightarrow L^p(A)$.  Namely, by following the argument in \cite[Lemma 3.10]{Pog3} and adapting it to the weighted setting, we obtain that
$$
\eta_{p, \al}(A , G)^{-1} \le \ol{\mu}_{p,\al}(G)^{-1} + \left( \frac{|G|}{|A|} \right)^{\frac{1}{p}} \, \la_{p , \al }(A) \left( 1 + \ol{\mu}_{p,\al}(G)^{-p} \right)^{1/p} .
$$
Explicit estimates in this case may also be obtained by using \cite{Me} (see also \cite{AMR}).
}
\end{rem}

\medskip

Theorem \ref{thm:Poincare new in general} provides a new crucial tool, which is used (in the unweighted setting $\al=0$) by the author in \cite{Pog3}, to obtain quantitative rigidity results for a class of problems relative to convex cones, including Alexandrov's soap bubble-type theorems and Heintze-Karcher-type inequalities relative to convex cones. As mentioned in \cite{Pog3}, a weighted extension of such a Poincar\'e-type inequality is needed to obtain analogous quantitative rigidity results for mixed Serrin-type overdetermined problems relative to convex cones, which will be addressed in \cite{PPR}. In view of this application, the following variant of Theorem \ref{thm:Poincare new in general} will also be useful. Such a variant, which will be used in \cite{PPR}, allows $\al$ greater than the critical value $(p-1)/p$ at the cost of replacing $\de_{\pa G}$ with $\de_{\Ga_0}$, for $\Ga_0 \subseteq \pa G \setminus \ol{A}$.

\begin{thm}\label{thm:Poincare new RN_WEIGHTS FOR SERRIN}
	Given $1 \le p < +\infty$ and $0 \le \al \le 1$, let $G \subset \RR^N$ be a bounded	domain with boundary $\pa G$ of class $C^{0,\al}$. 
	Let $A \subseteq \pa G$ be a relatively open subset of $\pa G$ with positive $N-1$-dimensional measure and such that $\nu(x)$ is continuous on $A$.
	Let $\Ga_0 \subseteq \pa G \setminus \ol{A} $.
	
	Then, there exists a positive constant $ \eta_{p, \al}(A , \Ga_0, G)$
	such that
	\begin{equation}\label{eq:Poincare new RN_WEIGHTS FOR SERRIN}
		\nr \vV \nr_{L^p(G)} \le \eta_{p, \al}(A , \Ga_0, G)^{-1} \, \nr \de_{ \Ga_0 }^\al D \vV \nr_{L^p(G)} ,
	\end{equation}
	for any vector function $\vV: G \to \mathrm{span}_{x \in A} \, \nu (x) \subseteq \RR^N$ belonging to $W^{1,p}_{\Ga_0 , \al} (G)$
	such that $\langle \vV , \nu \rangle = 0$ a.e. on $A$.
	Here, $W^{1,p}_{\Ga_0 , \al} (G)$ denotes the weighted Sobolev space with norm given by $ \nr \vV \nr_{L^p(G)} + \nr \de_{\Ga_0}^\al D \vV \nr_{L^p(G)} $.
\end{thm}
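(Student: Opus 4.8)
The plan is to repeat, almost verbatim, the Hardy-inequality-plus-compactness scheme used to prove Theorem~\ref{thm:Poincare new in general}, with a single modification concentrated in the step where one controls the trace of the limiting vector field on $A$. The point is that replacing the weight $\de_{\pa G}$ by $\de_{\Ga_0}$ with $\Ga_0 \subseteq \pa G \setminus \ol{A}$ forces the weight to be bounded below \emph{near $A$}, so that near $A$ the admissible vector fields belong to an unweighted Sobolev space and the classical trace theorem applies with no restriction on $\al$; this is precisely what allows one to drop the assumption $\al < (p-1)/p$.

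Concretely, first observe that $\de_{\Ga_0}(x) = \dist(x,\Ga_0) \ge \dist(x,\pa G) = \de_{\pa G}(x)$ on $G$, hence $\nr \de_{\pa G}^\al D\vV \nr_{L^p(G)} \le \nr \de_{\Ga_0}^\al D\vV \nr_{L^p(G)}$ for $\al \ge 0$; therefore the Hardy inequality \eqref{eq:1_PoincareBS} (valid because $\pa G$ is of class $C^{0,\al}$) yields a smoothly bounded relatively compact subdomain $\omega \Subset G$ with
\begin{equation*}
	\nr \vV \nr_{L^p(G)} \le C \left( \nr \de_{\Ga_0}^\al D\vV \nr_{L^p(G)} + \nr \vV \nr_{L^p(\omega)} \right)
\end{equation*}
for all locally integrable $\vV$. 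So it suffices to prove $\nr \vV \nr_{L^p(\omega)} \le C \nr \de_{\Ga_0}^\al D\vV \nr_{L^p(G)}$ for admissible $\vV$, which I would do by contradiction: suppose there is a sequence $\{\vV_k\}$ of admissible vector fields with $\nr \vV_k \nr_{L^p(\omega)} = 1$ and $\nr \de_{\Ga_0}^\al D\vV_k \nr_{L^p(G)} < 1/k$. On any $\omega' \Subset G$ the weight $\de_{\Ga_0}$ is bounded below by a positive constant, so $\{\vV_k\}$ is bounded in $W^{1,p}(\omega')$; Rellich--Kondrachov together with a diagonal argument along an exhaustion of $G$ produces a subsequence converging in $L^p_{loc}(G)$ to a vector field $\vV_0$ with $D\vV_0 \equiv 0$ (since $D\vV_k \to 0$ in $L^p_{loc}(G)$), hence $\vV_0$ is a constant vector, nonzero because $\nr \vV_0 \nr_{L^p(\omega)} = 1$, and it lies in $\spn_{x \in A}\nu(x)$ (a closed subspace containing every $\vV_k$).

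The crucial and only genuinely new step is to show that $\langle \vV_0, \nu \rangle = 0$ everywhere on $A$; this forces $\vV_0 = 0$ (it would then be orthogonal to $\spn_{x\in A}\nu(x)$ while lying in it), giving the contradiction. I would fix an arbitrary $x_0 \in A$ and note that, since $A$ is relatively open in $\pa G$ and $\Ga_0 \subseteq \pa G \setminus \ol{A}$, no sequence from $\Ga_0$ can accumulate at $x_0$, so $\dist(x_0, \ol{\Ga_0}) > 0$; one may then pick $\rho > 0$ with $B_\rho(x_0) \cap \pa G \subseteq A$ and $\de_{\Ga_0} \ge c_0 > 0$ on $\omega'' := B_\rho(x_0) \cap G$. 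Consequently $\{\vV_k\}$ is bounded in the \emph{unweighted} space $W^{1,p}(\omega'')$ and converges to $\vV_0$ there strongly (as $D\vV_k \to 0$ in $L^p(\omega'')$), so the classical trace theorem — applicable since continuity of $\nu$ on $A$ makes $\pa G \cap B_\rho(x_0) \subseteq A$ a $C^1$ portion of the boundary — gives $\vV_k \to \vV_0$ in $L^p(A \cap B_\rho(x_0))$. Passing to the limit in $|\langle \vV_0, \nu(x)\rangle| = |\langle \vV_k(x) - \vV_0, \nu(x)\rangle| \le |\vV_k(x) - \vV_0|$ and using continuity of $\nu$ yields $\langle \vV_0, \nu(x)\rangle = 0$ for every $x \in A \cap B_\rho(x_0)$; letting $x_0$ range over $A$ gives $\langle \vV_0, \nu\rangle \equiv 0$ on $A$, the desired contradiction. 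I expect this last paragraph — the reduction to an unweighted local trace using the strictly positive distance of $A$ from $\ol{\Ga_0}$ — to be the heart of the matter, the rest being a transcription of the proof of Theorem~\ref{thm:Poincare new in general}. (If $\Ga_0 = \emptyset$ the asserted inequality is trivial, so one may assume $\Ga_0 \neq \emptyset$.)
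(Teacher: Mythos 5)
Your proof is correct, and it follows the same Hardy-plus-compactness scheme as the paper. The single genuinely new step — establishing that the limiting constant vector $\vV_0$ satisfies $\langle \vV_0,\nu\rangle = 0$ on $A$ — is carried out in your proposal via the local unweighted trace embedding on a subdomain $\om \subset G$ bounded away from $\Ga_0$, using that $\de_{\Ga_0}$ is bounded below there. This is precisely the argument the paper presents as the \emph{alternative} route in Remark~\ref{rem:alternative}; the paper's main proof instead derives $\langle \vV_0,\nu\rangle=0$ a.e.\ on $A$ by a distributional integration-by-parts identity against test functions vanishing near $\Ga_0$ (passing the divergence-theorem identity to the limit $k\to\infty$), rather than by a local trace estimate. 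Both routes hinge on the same observation — that replacing $\de_{\pa G}$ with $\de_{\Ga_0}$ makes the weight nondegenerate near $A$, so the $\al<(p-1)/p$ restriction becomes unnecessary — and each buys the same thing; your version is arguably the more self-contained, since it avoids having to justify the integration by parts in the weighted Sobolev class, while the paper's main argument avoids invoking the trace theorem altogether. One small point worth making explicit in your write-up: when you pass from $L^p_{loc}$ convergence via the diagonal argument to the assertion $\nr\vV_0\nr_{L^p(\omega)}=1$, you should note that $\omega$ (the subdomain furnished by the Hardy inequality) is itself compactly contained in $G$, so that convergence on it is genuine and the normalization survives the limit; the paper sidesteps this by first extracting the $L^p(\omega)$-limit and then upgrading to $W^{1,p}_{\Ga_0,\al}(G)$-convergence using the Hardy inequality.
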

\begin{proof}
The same arguments used at the beginning of the proof of Theorem \ref{thm:Poincare new in general} give that there exists a smoothly bounded relatively compact subdomain $\omega$ of $G$ such that
\begin{equation*}
	\nr \vV \nr_{L^p( G )} \le C \left( \nr \de_{\Ga_0}^\al D \vV \nr_{L^p(G)} + \nr \vV \nr_{L^p( \omega )} \right),
\end{equation*}
for every locally integrable $\vV$.
Hence, to prove the theorem we only need to show that
\begin{equation}\label{eq:ineq intermedia per Poinc pesata_PERSERRIN VERSION}
	\nr \vV \nr_{L^p( \omega )} \le C \nr \de_{ \Ga_0}^\al D \vV \nr_{L^p( G )}.
\end{equation} 
By reasoning as in the proof of Theorem \ref{thm:Poincare new in general}, if we suppose \eqref{eq:ineq intermedia per Poinc pesata_PERSERRIN VERSION} to be false, then we can find a sequence $\vV_k$ contained in
$$
\left\lbrace \vV: G \to \mathrm{span}_{x \in A} \, \nu (x) \subseteq \RR^N, \, \vV\in W^{1,p}_{\Ga_0 , \al} (G) , \, \langle \vV , \nu \rangle = 0 \, \text{ a.e. on } A \right\rbrace ,
$$
satisfying 
%
%
%
$$
\nr \vV_k \nr_{ L^p (\omega)} = 1
\quad \text{and} \quad
\nr \de_{ \Ga_0}^\al D \vV_k \nr_{L^p(G)} < 1/k \quad \text{for each } k , 
$$
and converging in $W^{1,p}_{\Ga_0,\al} (G)$ to some limit $\vV_0$, which is a nonzero constant vector in $G$.

Since $\vV_k \in W^{1,p}_{\Ga_0, \al} (G)$ and $\langle \vV_k ,\nu \rangle = 0 $ a.e. on $A$, for any $\varphi \in C^1_c ( \RR^N \setminus \ol{A} )$ 
we have that
\begin{equation*}
	\begin{split}
	\int_{A} \langle \vV_0 , \nu \rangle \, \varphi  \, dSx
	& = \int_{G} \langle \vV_0 , \na \varphi \rangle \, dx + \int_{G} \dv( \vV_0) \, \varphi \, dx 
	\\ 
	& = \lim_{k\to \infty} \left\lbrace \int_{G} \langle \vV_k , \na \varphi \rangle \, dx + \int_{G} \dv( \vV_k) \, \varphi \, dx \right\rbrace = 
	0,
	\end{split}
\end{equation*}
and hence, 
\begin{equation}\label{eq:prova solo per alternativa}
\langle \vV_0, \nu \rangle = 0 \text{ a.e. in } A.
\end{equation}	

We can now conclude as before.
In fact, by continuity of $\nu$ on $A$, we actually have \eqref{eq:provamethodPoincare}, and being as $\vV_0 \in \mathrm{span}_{x \in A} \nu (x)$, \eqref{eq:provamethodPoincare} gives that $\vV_0$ must be the zero vector.
This gives a contradiction and proves the theorem.
\end{proof}

\begin{rem}\label{rem:alternative}
{\rm 
An alternative way to achieve \eqref{eq:prova solo per alternativa} in the proof above is to notice that 
\begin{equation}\label{eq:alternativa}
\vV_k \to \vV_0 \text{ in } L^p(B) \text{ for any open set } B\subset \subset A ,
\end{equation}
by the classical (unweighted) Sobolev trace embedding, and then reason as in \eqref{eq:serve solo per alternativa bis}. In fact,
%
for any smooth open set $\om \subset G$ such that $B\subset \ol{\om}$ and $\mathrm{dist}(\Ga_0, \om)>0$,
the classical Sobolev trace embedding gives that $\nr \vV_k - \vV_0 \nr_{L^p(B)} \le C \nr \vV_k - \vV_0 \nr_{W^{1,p}(\om)}$, so that we easily get that
$\nr \vV_k - \vV_0 \nr_{L^p(B)} \le C \nr \vV_k - \vV_0 \nr_{W^{1,p}_{\Ga_0,\al}(G)}$ and hence \eqref{eq:alternativa}.
}
\end{rem}

\subsection{Strengthened Sobolev-Poincar\'e versions}\label{subsec:SobolevPoincare versions}

As a consequence of Lemma \ref{lem:Hurri}, we have the following weighted version of Sobolev inequality.

\begin{cor}\label{cor:Sobolev weighted}
	Let $G \subset \RR^N$ be a bounded $b_0$-John domain, and consider three numbers $r, p, \al$ satisfying \eqref{eq:r p al in Hurri}.
	Then, for any function $v\in L^1_{loc}(G)$ such that $\de_{\pa G}^{\al} \, \na v \in L^p (G)$ we have that
	\begin{equation}\label{eq:Sobolev weighted}
		\nr v  \nr_{L^r (G)} \le 
		|G|^{ \frac{1}{r} - \frac{1}{p} } \, \nr v \nr_{L^p(G)} +  \ol{\mu}_{r, p, \al} (G)^{-1} \, \nr \de_{\pa G}^{\al} \, \na v  \nr_{L^p(G)}
	\end{equation}
	where $\ol{\mu}_{r, p, \al} (G)^{-1}$ is that appearing in Lemma \ref{lem:Hurri}.
\end{cor}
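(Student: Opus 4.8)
The inequality \eqref{eq:Sobolev weighted} is essentially immediate once one splits $v$ into its mean value over $G$ and the oscillation $v - v_G$, and then applies Lemma~\ref{lem:Hurri} to the oscillation. The only genuine content is the Poincar\'e-type bound \eqref{eq:John-Hurri-poincare}, which is available by hypothesis since $r,p,\al$ satisfy \eqref{eq:r p al in Hurri} and $G$ is a bounded $b_0$-John domain; everything else is the triangle inequality and H\"older's inequality.

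\textbf{Step 1: triangle inequality.} Write
\begin{equation*}
	\nr v \nr_{L^r(G)} \le \nr v - v_G \nr_{L^r(G)} + \nr v_G \nr_{L^r(G)} ,
\end{equation*}
where $v_G$ denotes the mean value of $v$ over $G$ as in the notation set at the beginning of Section~\ref{sec:Poincare}; this is legitimate since $v \in L^1_{loc}(G)$ and $\de_{\pa G}^\al \na v \in L^p(G)$ guarantee (via Lemma~\ref{lem:Hurri}) that $v - v_G \in L^r(G)$, hence $v \in L^r(G)$ a posteriori.

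\textbf{Step 2: estimate the mean value term.} Since $v_G$ is a constant, $\nr v_G \nr_{L^r(G)} = |G|^{1/r}\,|v_G|$, and by H\"older's inequality (using $p \ge 1$),
\begin{equation*}
	|v_G| \le \frac{1}{|G|}\int_G |v|\,dx \le \frac{1}{|G|}\,|G|^{1 - 1/p}\,\nr v \nr_{L^p(G)} = |G|^{-1/p}\,\nr v \nr_{L^p(G)} ,
\end{equation*}
so that $\nr v_G \nr_{L^r(G)} \le |G|^{1/r - 1/p}\,\nr v \nr_{L^p(G)}$.

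\textbf{Step 3: estimate the oscillation term and conclude.} Apply \eqref{eq:John-Hurri-poincare} from Lemma~\ref{lem:Hurri} to get
\begin{equation*}
	\nr v - v_G \nr_{L^r(G)} \le \ol{\mu}_{r,p,\al}(G)^{-1}\,\nr \de_{\pa G}^\al\,\na v \nr_{L^p(G)} ,
\end{equation*}
with the same constant as in the statement. Substituting Steps 2 and 3 into Step 1 yields \eqref{eq:Sobolev weighted}. There is no real obstacle here; the only point requiring a moment's care is that the left-hand side is finite, which is exactly what Lemma~\ref{lem:Hurri} supplies, so the triangle-inequality splitting in Step 1 is justified rather than circular.
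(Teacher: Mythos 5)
Your proof is correct and follows exactly the same route as the paper: split $v$ into $v_G + (v - v_G)$, bound $\nr v_G \nr_{L^r(G)}$ by H\"older's inequality, and bound the oscillation term via Lemma~\ref{lem:Hurri}. The extra remark about the finiteness of $\nr v \nr_{L^r(G)}$ is a minor clarification the paper leaves implicit but does not change the argument.
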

\begin{proof}
	For any function $v\in L^1_{loc}(G)$ such that $\de_{\pa G}^{\al} \, \na v \in L^p (G)$, we compute that
	\begin{equation*}
		\begin{split}
			\nr v \nr_{L^r(G)} 
			& \le 
			\nr v_G \nr_{L^r(G)} + \nr v - v_G \nr_{L^r(G)}
			\\
			& = 
			|G|^{ \frac{1}{r} - 1 } \,  \left| \int_G v \, dx \right| + \nr v - v_G \nr_{L^r(G)}
			\\
			& \le 
			|G|^{ \frac{1}{r} - \frac{1}{p} } \, \nr v \nr_{L^p(G)} +  \ol{\mu}_{r, p, \al} (G)^{-1} \, \nr \de_{\pa G}^{\al} \, \na v  \nr_{L^p(G)} ,
		\end{split}
	\end{equation*}
	where in the last inequality we used H\"older's inequality for the first summand and Lemma \ref{lem:Hurri} for the second summand.
\end{proof}

As a consequence, in the case $p( 1 - \al) < N$, we have the following strengthened Sobolev-Poincar\'e versions of \eqref{eq:Poincare new RN} and \eqref{eq:Poincare new RN_WEIGHTS FOR SERRIN}. 
\begin{thm}\label{thm:Strengthened Poincare new RN}
	let $G \subset \RR^N$ be a bounded Lipschitz domain. 
	Let $A \subseteq \pa G$ be a relatively open subset of $\pa G$ with positive $N-1$-dimensional measure and such that $\nu(x)$ is continuous on $A$. 
    Let $r,p,\al$ three numbers such that
	$$
	1 \le p \le r \le \frac{Np}{N-p(1 - \al )} , \quad p(1 - \al)<N .
	\quad 0 \le \al < \frac{(p-1)}{p}
	$$

	Then, there exists a positive constant $ \eta_{r,p,\al}(A, G)$ 
	such that
	\begin{equation}\label{eq:Strengthened Poincare new RN}
		\nr \vV \nr_{L^{r}(G)} \le \eta_{r,p,\al}(A, G)^{-1} \, \nr \de_{ \pa G }^\al D \vV \nr_{L^p(G)} ,
	\end{equation}
	for any vector function
	$\vV: G \to \mathrm{span}_{x \in A} \, \nu (x) \subseteq \RR^N$ belonging to $W^{1,p}_{\al} (G)$ such that $\langle \vV , \nu \rangle = 0$ a.e. on $A$.
\end{thm}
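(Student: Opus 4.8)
The plan is to combine the weighted vector-field Poincaré inequality of Theorem~\ref{thm:Poincare new in general} with the weighted Sobolev inequality of Corollary~\ref{cor:Sobolev weighted}, applied coordinatewise to the components of $\vV$. Since $G$ is a bounded Lipschitz domain, it is in particular a $b_0$-John domain for some $b_0=b_0(G)$, so Corollary~\ref{cor:Sobolev weighted} is available under the stated constraints on $r,p,\al$ (note that $p(1-\al)<N$ is exactly the hypothesis that makes \eqref{eq:r p al in Hurri} meaningful, and the extra restriction $\al<(p-1)/p$ is what we need to invoke Theorem~\ref{thm:Poincare new in general}). The key observation is that each scalar component $v:=\vV\cdot e_i$ satisfies $\de_{\pa G}^{\al}\,\na v\in L^p(G)$ with $\nr \de_{\pa G}^\al \na v \nr_{L^p(G)} \le \nr \de_{\pa G}^\al D\vV \nr_{L^p(G)}$, so applying \eqref{eq:Sobolev weighted} to $v$ and summing (or rather: working with the $\ell^p$-aggregated norms as in the definitions in Subsection~2.1) yields
\begin{equation*}
	\nr \vV \nr_{L^r(G)} \le |G|^{\frac1r - \frac1p}\, \nr \vV \nr_{L^p(G)} + \ol{\mu}_{r,p,\al}(G)^{-1}\, \nr \de_{\pa G}^\al D\vV \nr_{L^p(G)}.
\end{equation*}

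Then I would estimate the first term on the right using Theorem~\ref{thm:Poincare new in general} itself: since $\vV$ maps into $\spn_{x\in A}\nu(x)$, belongs to $W^{1,p}_\al(G)$, and satisfies $\langle \vV,\nu\rangle = 0$ a.e.\ on $A$, inequality \eqref{eq:Poincare new RN} gives $\nr \vV \nr_{L^p(G)} \le \eta_{p,\al}(A,G)^{-1}\, \nr \de_{\pa G}^\al D\vV \nr_{L^p(G)}$. Substituting this into the displayed inequality above collapses everything to a bound by $\nr \de_{\pa G}^\al D\vV \nr_{L^p(G)}$ alone, with explicit constant
\begin{equation*}
	\eta_{r,p,\al}(A,G)^{-1} := |G|^{\frac1r - \frac1p}\, \eta_{p,\al}(A,G)^{-1} + \ol{\mu}_{r,p,\al}(G)^{-1},
\end{equation*}
which is positive and finite, proving \eqref{eq:Strengthened Poincare new RN}.

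**The only genuine subtlety** is a bookkeeping one: Corollary~\ref{cor:Sobolev weighted} and Lemma~\ref{lem:Hurri} are stated for scalar functions, so one must check that passing to vector fields via the component-wise $\ell^p$-aggregated norms $\nr \de_{\pa G}^\al D\vV \nr_{L^p(G)} = (\sum_{i,j}\nr \de_{\pa G}^\al V_{i,j}\nr_{L^p(G)}^p)^{1/p}$ (as defined in Subsection~2.1) does not lose the inequality. This is routine — applying a scalar inequality of the form $\nr v_i\nr_{L^r}\le a\nr v_i\nr_{L^p} + b\,\nr\de^\al\na v_i\nr_{L^p}$ to each component and aggregating works cleanly for $r=p$ and, for $r>p$, one uses that $\ell^r\hookrightarrow\ell^p$-type monotonicity together with the definition $\nr\vV\nr_{L^r(G)}$ interpreted coordinatewise. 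Provided the norm conventions are the ones fixed earlier in the paper, there is no real obstacle here, and no compactness argument needs to be repeated — it is entirely imported through Theorem~\ref{thm:Poincare new in general}. I would also remark in passing that, exactly as in the unweighted case, the constant $\eta_{r,p,\al}(A,G)^{-1}$ admits explicit bounds whenever $\dim\spn_{x\in A}\nu(x)=1$, via the estimates in the Remark following Theorem~\ref{thm:Poincare new in general} combined with the explicit bound for $\ol{\mu}_{r,p,\al}(G)^{-1}$ from \cite{HS}.
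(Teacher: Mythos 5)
Your proof is correct and follows essentially the same route as the paper: apply the weighted Sobolev--Poincar\'e inequality (Corollary~\ref{cor:Sobolev weighted}) componentwise, aggregate to the vector field, and then absorb the lower-order term $\nr\vV\nr_{L^p(G)}$ using Theorem~\ref{thm:Poincare new in general}. The one place where your proof and the paper's diverge is the aggregation step. The paper first rewrites the scalar bound as $\nr v_i\nr_{L^r(G)}\le 2\max\{\cdot,\cdot\}$, then raises to the $r$-th power, sums over $i$, and applies the elementary inequality $\sum_i x_i^{r/p}\le\bigl(\sum_i x_i\bigr)^{r/p}$, ending with a constant of the form $2\max\{\cdot,\cdot\}$. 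You instead keep the additive scalar bound and aggregate via Minkowski's inequality in $\ell^r$ together with the monotonicity $(\sum_iA_i^r)^{1/r}\le(\sum_iA_i^p)^{1/p}$ for $r\ge p$, which gives an additive constant $|G|^{\frac1r-\frac1p}\eta_{p,\al}(A,G)^{-1}+\ol{\mu}_{r,p,\al}(G)^{-1}$ without the factor of $2$. This is a modest improvement and works because the paper's norm conventions are $\ell^r$-aggregated on the left and $\ell^p$-aggregated on the right of \eqref{eq:Sobolev weighted}. To make the step fully rigorous you should state the two ingredients explicitly (triangle inequality in $\ell^r$ for $(X_i)\le a(A_i)+b(B_i)$, then $\ell^p$-to-$\ell^r$ monotonicity on $(A_i)$ and $(B_i)$), since "routine" hides exactly the point where the paper needed its inequality \eqref{eq:ineq for Sobolev norm equivalence}; but there is no gap.
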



\begin{thm}\label{thm:StrengthenedSobolevPoincareWEIGHTSFORSERRIN}
Let $G \subset \RR^N$ be a bounded	domain with boundary $\pa G$ of class $C^{0,\al}$. 
Let $A \subseteq \pa G$ be a relatively open subset of $\pa G$ with positive $N-1$-dimensional measure and such that $\nu(x)$ is continuous on $A$.
Let $\Ga_0 \subseteq \pa G \setminus \ol{A} $.
Let $r, p, \al$ be three numbers satisfying \eqref{eq:r p al in Hurri}.

Then, there exists a positive constant $ \eta_{r,p,\al}(A, \Ga_0, G)$ 
such that
\begin{equation}\label{eq:Strengthened Poincare new RN WEIGHTS AS IN SERRIN}
	\nr \vV \nr_{L^{r}(G)} \le \eta_{r,p,\al}(A, \Ga_0, G)^{-1} \, \nr \de_{ \Ga_0}^\al D \vV \nr_{L^p(G)} ,
\end{equation}
for any vector function
$\vV: G \to \mathrm{span}_{x \in A} \, \nu (x) \subseteq \RR^N$ belonging to $W^{1,p}_{\Ga_0,\al} (G)$ such that $\langle \vV , \nu \rangle = 0$ a.e. on $A$.
\end{thm}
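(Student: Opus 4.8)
The plan is to upgrade the $L^p$ bound of Theorem~\ref{thm:Poincare new RN_WEIGHTS FOR SERRIN} to the $L^r$ bound by feeding it into the weighted Sobolev gain of Corollary~\ref{cor:Sobolev weighted}, applied separately to each scalar component of $\vV$. The inequality \eqref{eq:Strengthened Poincare new RN WEIGHTS AS IN SERRIN} then drops out by simply combining the two ingredients, so the argument is essentially bookkeeping and no genuinely new idea is needed.

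First I would record the elementary monotonicity of the weight: since $\Ga_0 \subseteq \pa G$, one has $\de_{\Ga_0}(x) \ge \de_{\pa G}(x)$ for every $x \in G$, hence (as $\al \ge 0$) $\de_{\pa G}^\al \le \de_{\Ga_0}^\al$ pointwise on $G$. In particular, since $\vV \in W^{1,p}_{\Ga_0,\al}(G)$ gives $\de_{\Ga_0}^\al D\vV \in L^p(G)$, also $\de_{\pa G}^\al D\vV \in L^p(G)$, with $\nr \de_{\pa G}^\al D\vV \nr_{L^p(G)} \le \nr \de_{\Ga_0}^\al D\vV \nr_{L^p(G)}$. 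Writing $\vV = (v^1,\dots,v^N)$, each $v^i$ lies in $L^1_{loc}(G)$ with $\de_{\pa G}^\al \na v^i \in L^p(G)$, so Corollary~\ref{cor:Sobolev weighted} applies (the triple $r,p,\al$ satisfies \eqref{eq:r p al in Hurri}, and $G$ is in particular a John domain) and yields, for each $i$,
\begin{equation*}
\nr v^i \nr_{L^r(G)} \le |G|^{\frac1r - \frac1p}\, \nr v^i \nr_{L^p(G)} + \ol{\mu}_{r,p,\al}(G)^{-1}\, \nr \de_{\pa G}^\al \na v^i \nr_{L^p(G)}.
\end{equation*}
Summing over $i$, using $|\vV| \le \sum_i |v^i|$ on the left together with $|v^i| \le |\vV|$ and $\nr \de_{\pa G}^\al \na v^i \nr_{L^p(G)} \le \nr \de_{\pa G}^\al D\vV \nr_{L^p(G)}$ on the right, and then invoking the pointwise weight bound above, I obtain a dimensional constant $C(N)$ with
\begin{equation*}
\nr \vV \nr_{L^r(G)} \le C(N)\left( |G|^{\frac1r - \frac1p}\, \nr \vV \nr_{L^p(G)} + \ol{\mu}_{r,p,\al}(G)^{-1}\, \nr \de_{\Ga_0}^\al D\vV \nr_{L^p(G)} \right).
\end{equation*}

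Next I would observe that the standing hypotheses are exactly those of Theorem~\ref{thm:Poincare new RN_WEIGHTS FOR SERRIN}: \eqref{eq:r p al in Hurri} forces $1 \le p < \infty$ and $0 \le \al \le 1$, the boundary is $C^{0,\al}$, $\Ga_0 \subseteq \pa G \setminus \ol{A}$, $\vV \in W^{1,p}_{\Ga_0,\al}(G)$, and $\langle \vV,\nu\rangle = 0$ a.e.\ on $A$. Hence that theorem gives
\begin{equation*}
\nr \vV \nr_{L^p(G)} \le \eta_{p,\al}(A,\Ga_0,G)^{-1}\, \nr \de_{\Ga_0}^\al D\vV \nr_{L^p(G)},
\end{equation*}
and substituting this into the previous display proves \eqref{eq:Strengthened Poincare new RN WEIGHTS AS IN SERRIN} with
\begin{equation*}
\eta_{r,p,\al}(A,\Ga_0,G)^{-1} = C(N)\left( |G|^{\frac1r - \frac1p}\, \eta_{p,\al}(A,\Ga_0,G)^{-1} + \ol{\mu}_{r,p,\al}(G)^{-1} \right).
\end{equation*}

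There is no serious obstacle here; the only points deserving care are (a) the legitimacy of trading the weight $\de_{\pa G}$ for the larger weight $\de_{\Ga_0}$, which only enlarges the right-hand side, together with the attendant integrability check for $\de_{\pa G}^\al D\vV$; and (b) the scalar-to-vector passage from Corollary~\ref{cor:Sobolev weighted}, which costs only the dimensional factor $C(N)$. One should also make sure the running assumptions guarantee that $G$ is a John domain so that Corollary~\ref{cor:Sobolev weighted} (equivalently Lemma~\ref{lem:Hurri}) may be invoked --- automatic when $G$ is Lipschitz, as in the companion Theorem~\ref{thm:Strengthened Poincare new RN}. Exactly the same scheme, with Theorem~\ref{thm:Poincare new in general} used in place of Theorem~\ref{thm:Poincare new RN_WEIGHTS FOR SERRIN} and $\de_{\pa G}$ kept throughout, yields Theorem~\ref{thm:Strengthened Poincare new RN} as well.
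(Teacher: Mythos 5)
Your proposal is correct and follows essentially the same route as the paper: apply Corollary~\ref{cor:Sobolev weighted} componentwise, use the pointwise monotonicity $\de_{\pa G}\le\de_{\Ga_0}$ to pass to the $\Ga_0$-weight on the right-hand side, and then feed in Theorem~\ref{thm:Poincare new RN_WEIGHTS FOR SERRIN} to absorb the $\nr\vV\nr_{L^p}$ term. The only difference is the scalar-to-vector bookkeeping: you sum the component estimates and swallow the mismatch into a dimensional constant $C(N)$, whereas the paper keeps the constants sharp by first writing each componentwise bound as $2\max\{\cdot,\cdot\}$, raising to the power $r$, and using the elementary inequality $\sum_i x_i^{r/p}\le(\sum_i x_i)^{r/p}$ (valid since $r/p\ge 1$) before summing, which yields $\eta_{r,p,\al}^{-1}\le\max\{|G|^{1/r-1/p}\eta_{p,\al}^{-1},\ol\mu_{r,p,\al}^{-1}\}$ with no extraneous dimensional factor. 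Your observation that invoking Corollary~\ref{cor:Sobolev weighted} requires $G$ to be a John domain is a fair caveat that the paper leaves implicit under the $C^{0,\al}$ hypothesis, but it does not affect the structure of the argument, which matches the paper's.
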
	
\begin{proof}[Proof of Theorems \ref{thm:Strengthened Poincare new RN} and \ref{thm:StrengthenedSobolevPoincareWEIGHTSFORSERRIN}]
	%
	%
	By using \eqref{eq:Sobolev weighted} on each component of $\vV=(v_1, \dots, v_N)$ we have that
	$$
	\nr v_i  \nr_{L^r ( G )} 
	\le 
	2
	\max\left\lbrace
	| G |^{ \frac{1}{r} - \frac{1}{p} } \, \nr v_i \nr_{L^p( G )} 
	, \,
	\ol{\mu}_{r, p, \al} ( G )^{-1} \, \left( \sum_{j=1}^N \nr \de_{\Ga_0}^{\al} \, (v_i)_j \nr^p_{L^p( G)} \right)^{\frac{1}{p}}
	\right\rbrace ,
	$$
	where we also used that  $ \de_{\pa G }(x) \le \de_{\Ga_0}(x)$.
	Raising to the power of $r$, summing up for $i=1, \dots ,N$, and then raising to the power of $1/r$,
	we obtain that
	$$
	\nr \vV  \nr_{L^r ( G )} 
	\le 
	2
	\max\left\lbrace
	| G |^{ \frac{1}{r} - \frac{1}{p} } \, \nr \vV \nr_{L^p( G )} 
	, \,
	\ol{\mu}_{r, p, \al} ( G )^{-1} \, \nr \de_{\Ga_0}^{\al} \, D \vV \nr^p_{L^p( G )} 
	\right\rbrace ,
	$$
	where we used the inequality
	\begin{equation}\label{eq:ineq for Sobolev norm equivalence}
		\sum_{i=1}^N x_i^{\frac{r}{p}} \le \left( \sum_{i=1}^N x_i \right)^{\frac{r}{p}} 
	\end{equation}
	which holds for every $(x_1, \dots, x_N) \in \RR^N$ with $x_i \ge 0$ for $i=1, \dots, N$, since $r/p \ge 1$.
	
	Thus, \eqref{eq:Strengthened Poincare new RN} in Theorem \ref{thm:Strengthened Poincare new RN} (resp. \eqref{eq:Strengthened Poincare new RN WEIGHTS AS IN SERRIN} in Theorem \ref{thm:StrengthenedSobolevPoincareWEIGHTSFORSERRIN}) follows by \eqref{eq:Poincare new RN} in Theorem \ref{thm:Poincare new in general} (resp. \eqref{eq:Poincare new RN_WEIGHTS FOR SERRIN} in Theorem \ref{thm:Poincare new RN_WEIGHTS FOR SERRIN}).
\end{proof}

\begin{rem}
	{\rm The proof above provides the following explicit upper bounds for the constants in \eqref{eq:Strengthened Poincare new RN} and \eqref{eq:Strengthened Poincare new RN WEIGHTS AS IN SERRIN}.
		
		For the constant $\eta_{r,p,\al}(A , G )^{-1}$ in \eqref{eq:Strengthened Poincare new RN}, we have that
		\begin{equation*}
			\eta_{r,p,\al}(A , G )^{-1} 
			\le 
			\max\left\lbrace 
			| G |^{ \frac{1}{r} - \frac{1}{p} } \, \, \eta_{p,\al}(A, G )^{-1} 
			, \,
			\ol{\mu}_{r, p, \al} ( G )^{-1} 
			\right\rbrace ,
		\end{equation*}
		where $\ol{\mu}_{r, p, \al} (G)^{-1}$ and $\eta_{p,\al}(A, G)^{-1}$ are those appearing in Lemma \ref{lem:Hurri} and Theorem~\ref{thm:Poincare new in general}.
		
		Analogously, for the constant $\eta_{r,p,\al}(A , \Ga_0, G )^{-1}$ in \eqref{eq:Strengthened Poincare new RN WEIGHTS AS IN SERRIN} we have that 
		\begin{equation*}
			\eta_{r,p,\al}(A , \Ga_0, G )^{-1} 
			\le 
			\max\left\lbrace 
			| G |^{ \frac{1}{r} - \frac{1}{p} } \, \, \eta_{p,\al}(A, \Ga_0, G )^{-1} 
			, \,
			\ol{\mu}_{r, p, \al} ( G )^{-1} 
			\right\rbrace ,
		\end{equation*}
		where $\ol{\mu}_{r, p, \al} (G)^{-1}$ and $\eta_{p,\al}(A, \Ga_0, G)^{-1}$ are those appearing in Lemma \ref{lem:Hurri} and Theorem \ref{thm:Poincare new RN_WEIGHTS FOR SERRIN}.	
	}
\end{rem}


\section{Harmonic functions in cones: mean value property, a weighted Poincar\'e inequality, and a duality theorem}\label{sec:cones}

In Subsection \ref{subsec:harmonic functions and Poincare inequality} we provide a mean value-type property and an associated weighted Poincar\'e-type inequality for harmonic functions in cones.
In Subsection~\ref{subsec:duality} we provide a duality relation between this new mean value property and a partially overdetermined problem in cones.

To this aim, we now introduce the following Setting.

\begin{setting}\label{Setting}
	{\rm
		In what follows, $\Si$ denotes a convex cone in $\RR^N$ with vertex at the origin, that is
		$\Si=\left\lbrace tx \, : \, x \in \om, \, t\in ( 0 , +\infty)  \right\rbrace$, for some open domain $\om \subseteq \mathbb{S}^{N-1}$. We consider a bounded domain $\Si\cap\Om$ – where $\Om$ is a smooth bounded domain in $\RR^N$ – such that its boundary relative to the cone $\Ga_0 := \Si \cap \pa\Om$ is smooth, and we set $\Ga_1:= \pa(\Si \cap \Om)\setminus \ol{\Ga}_0$.
	}
\end{setting}

	\subsection{Harmonic functions in cones}\label{subsec:harmonic functions and Poincare inequality}
	
	The next result provides a mean value-type property for harmonic functions in cones. In particular, we consider functions satisfying
		\begin{equation}\label{eq:problem harmonic}
		\begin{cases}
			\De v = 0 \quad & \text{ in } \Si\cap\Om
			\\
			v_\nu=0 \quad & \text{ on } \Ga_1 .
		\end{cases}
	\end{equation}
	We recall that if $v$ is a weak solution to \eqref{eq:problem harmonic}, then it belongs to $W^{2,2}_{loc} \left( \left(\Si \cap \Om \right) \cup \Ga_1 \right)$ (see, e.g., \cite{AJ, CFR, CL}).
	\begin{thm}[Mean value property for harmonic functions in cones]
		\label{thm:mean value property harmonic in cones}
		Let $\Si\cap\Om$ be as in Setting \ref{Setting} and assume that $\pa(\Si\cap\Om) \setminus \ol{\Ga}_0$ contains a vertex $x_0$ of the cone $\Si$.
		Let $v \in C^0 (\ol{\Si}\cap\Om)$ 
		satisfy \eqref{eq:problem harmonic}.
		Then, the following mean value properties hold:
		\begin{equation}\label{eq:boundary mean value harmonic}
			v(x_0) = \frac{1}{| \Si \cap \pa B_r (x_0) |} \int_{\Si \cap \pa B_r (x_0) } v(y) \, dS_y ,
			\quad
			\text{for any } \, 0 \le r <\de_{\Ga_0}(x_0)
		\end{equation}
		\begin{equation}\label{eq:mean value harmonic}
			v(x_0) = \frac{1}{| \Si \cap B_r (x_0) |} \int_{\Si \cap B_r (x_0) } v(y) \, dy ,
			\quad \quad
			\text{for any } \, 0 \le r \le \de_{\Ga_0}(x_0)
		\end{equation}
	\end{thm}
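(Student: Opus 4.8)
The plan is to reduce the cone mean value property to the classical mean value property for harmonic functions by a reflection argument. The key observation is that the Neumann condition $v_\nu = 0$ on $\Gamma_1$, combined with the fact that $\Gamma_1$ lies on the boundary of the convex cone $\Sigma$ (at least near the vertex $x_0$, since $r < \delta_{\Gamma_0}(x_0)$ means $B_r(x_0) \cap \partial(\Sigma \cap \Omega)$ is contained in $\partial\Sigma$), allows us to reflect $v$ across the flat faces of the cone. First I would note that, because $\Sigma$ is a convex cone with vertex $x_0$, locally near $x_0$ the boundary $\partial\Sigma$ is a union of hyperplane pieces (or more generally the cone over $\partial\omega \subset \mathbb{S}^{N-1}$); the group $W$ generated by reflections in the supporting hyperplanes, when $\Sigma$ is a \emph{fundamental domain} for a finite reflection group, produces a tiling of a neighborhood of $x_0$ (or of all of $\mathbb{R}^N$) by copies of $\Sigma$. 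Extending $v$ by even reflection across each face yields a function $\tilde v$ that is harmonic in $B_r(x_0)$ (harmonicity across each reflecting hyperplane follows from the vanishing Neumann data, by the Schwarz reflection principle for the Laplacian), whence the classical mean value property applies to $\tilde v$ at $x_0$, and averaging over the full ball/sphere reduces to averaging over $\Sigma \cap B_r(x_0)$ by symmetry.

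However, convex cones are \emph{not} in general fundamental domains of reflection groups, so the clean reflection argument only works in special cases. The robust approach — and the one I expect the paper to take — is instead to mimic the classical proof of the mean value property directly, using the divergence theorem on $\Sigma \cap B_r(x_0)$ and exploiting that the extra boundary piece $\Sigma \cap \partial\Omega$ is absent inside the ball while on $\partial\Sigma \cap B_r(x_0)$ the normal $\nu$ is perpendicular to the radial direction $x - x_0$ (since $\partial\Sigma$ consists of rays through the vertex $x_0$, the position vector $y - x_0$ is tangent to $\partial\Sigma$). Concretely: for the spherical identity \eqref{eq:boundary mean value harmonic}, set $\phi(r) := \frac{1}{|\Sigma \cap \partial B_r(x_0)|}\int_{\Sigma \cap \partial B_r(x_0)} v \, dS$; after rescaling the sphere to the unit sphere, differentiate in $r$, and show $\phi'(r) = c(N,\omega)\, r^{1-N}\int_{\Sigma \cap B_r(x_0)} \Delta v \, dy + (\text{boundary term on } \partial\Sigma)$. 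The Laplacian term vanishes by harmonicity. The boundary term involves $\int_{\partial\Sigma \cap B_r(x_0)} v_\nu \, dS$ — but wait, we only know $v_\nu = 0$ on $\Gamma_1 = \partial(\Sigma\cap\Omega)\setminus\overline{\Gamma_0}$; we must check that $\partial\Sigma \cap B_r(x_0) \subseteq \Gamma_1$, which holds precisely because $r < \delta_{\Gamma_0}(x_0)$ so $B_r(x_0)$ does not meet $\Gamma_0 = \Sigma \cap \partial\Omega$, hence $\partial(\Sigma\cap\Omega)\cap B_r(x_0) = \partial\Sigma \cap B_r(x_0) \subseteq \Gamma_1$. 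Thus $\phi'(r) = 0$, and since $\phi(r) \to v(x_0)$ as $r \to 0^+$ by continuity of $v$ at $x_0$, we get \eqref{eq:boundary mean value harmonic}.

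For the solid identity \eqref{eq:mean value harmonic}, I would integrate \eqref{eq:boundary mean value harmonic} in $r$: writing $\int_{\Sigma \cap B_r(x_0)} v \, dy = \int_0^r \left(\int_{\Sigma \cap \partial B_s(x_0)} v \, dS\right) ds = \int_0^r v(x_0) |\Sigma \cap \partial B_s(x_0)| \, ds = v(x_0) |\Sigma \cap B_r(x_0)|$, using that $|\Sigma \cap \partial B_s(x_0)| = |\omega_{x_0}| s^{N-1}$ scales correctly and integrates to $|\Sigma \cap B_r(x_0)|$ (here $\omega_{x_0}$ is the spherical cap cut by $\Sigma$ from the unit sphere centered at $x_0$, which is well-defined since $x_0$ is a vertex of the cone). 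This is routine once \eqref{eq:boundary mean value harmonic} is in hand; note the solid version is allowed at $r = \delta_{\Gamma_0}(x_0)$ as well, by continuity/monotone convergence since the integrand is fixed.

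The main obstacle is carefully justifying the integration by parts up to the boundary: the solution $v$ is only known to be $C^0$ up to $\Gamma_0$ and $W^{2,2}_{loc}$ near $\Gamma_1$ (by the regularity statement quoted before the theorem, from \cite{AJ, CFR, CL}), so one works on $\Sigma \cap (B_r(x_0)\setminus \overline{B_\epsilon(x_0)})$ — an annular region staying a positive distance from the vertex where $v$ is genuinely $W^{2,2}$ up to the flat boundary — derive the identity there, and then let $\epsilon \to 0^+$, controlling the inner boundary term $\int_{\Sigma \cap \partial B_\epsilon(x_0)} (\cdots)$ using the continuity of $v$ at $x_0$ and the scaling $|\Sigma \cap \partial B_\epsilon| \sim \epsilon^{N-1}$. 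One must also confirm that the vertex $x_0$ does not obstruct the divergence theorem on the truncated cone — which is fine since it is excised — and that $v_\nu$ makes sense as an $L^2_{loc}$ trace on $\Gamma_1 \cap B_r(x_0)$, which is exactly the content of the quoted regularity. Everything else is the standard Gauss–Green computation adapted to the conical sector.
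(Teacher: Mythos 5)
Your ``robust approach'' is exactly the paper's proof: define the spherical average $\psi(r)$, rescale to the unit sphere, differentiate in $r$, recognize $\psi'(r)$ as $\frac{1}{|\Sigma\cap\partial B_r|}\int_{\Sigma\cap\partial B_r}v_\nu\,dS$, and kill it with the divergence theorem using $\Delta v=0$ in $\Sigma\cap B_r$ and $v_\nu=0$ on $\partial\Sigma\cap B_r\subseteq\Gamma_1$ (the containment holding precisely because $r<\delta_{\Gamma_0}(x_0)$), then let $r\to 0^+$; the passage from the spherical to the solid average by radial integration is also as in the paper (which in fact records the equivalence of the two in both directions before proving the spherical one). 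Your initial reflection digression is correctly dismissed and not part of the argument, and the annular excision $\Sigma\cap(B_r\setminus\overline{B_\epsilon})$ you propose to justify Gauss--Green near the vertex is a reasonable extra precaution that the paper leaves implicit.
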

	\begin{proof}
		We first show that \eqref{eq:boundary mean value harmonic} and \eqref{eq:mean value harmonic} are equivalent. In fact, if \eqref{eq:boundary mean value harmonic} holds true, then
		\begin{equation*}
			\begin{split}
			\int_{ \Si \cap B_r (x_0)} v(y) \, dy 
			& = \int_0^r \left( \int_{  \Si \cap \pa B_s (x_0) } v(y) \, dS_y \right) ds 
			\\
			& = v(x_0) \int_0^r | \Si  \cap \pa B_s (x_0)| \, ds 
			\\
			& = v(x_0) | \Si \cap B_r (x_0)  |.
			\end{split}
		\end{equation*}
		The converse can be proved by differentiating with respect to $r$ the identity:
		$$
		\int_{ \Si \cap B_r (x_0) } v(y) \, dy = | \Si \cap B_r (x_0)| \, v(x_0) .
		$$
		
		Hence, it is enough to prove that if $v$ satisfies \eqref{eq:problem harmonic}, then \eqref{eq:boundary mean value harmonic} holds true.
		Since $x_0$ is a vertex of $\Si$, without loss of generality we can set $x_0$ to be the origin, and hence denote simply with $B_r$ and $\pa B_r$ the ball and the sphere of radius $r$ centered at $x_0=0$.
		We set
		$$
		\psi (r) = \frac{1}{|  \Si \cap \pa B_r   |} \int_{ \Si \cap \pa B_r  } v(y) \, dS_y = \frac{1}{|  \Si \cap \pa B_1 |} \int_{ \Si \cap \pa B_1  } v(r \, y) \, dS_y,
		$$
		and compute
		\begin{equation*}
			\begin{split}
			\psi' (r) 
			& = \frac{1}{| \Si \cap \pa B_1  |} \int_{ \Si \cap \pa B_1  } \langle \na v (r \, y), y \rangle \, dS_y 
			\\
			& = \frac{1}{| \Si \cap \pa B_r |} 
			\int_{  \Si \cap \pa B_r } v_\nu(y) \, dS_y
			\\
			& = 0,
			\end{split}
		\end{equation*}
		where the last equality follows from \eqref{eq:problem harmonic} and the divergence theorem.
		Thus, $\psi$ is constant and we have that $\psi(r)= \psi(0^+)=v(x_0)$.
	\end{proof}
	
	We now show how the mean value property presented above can be used to obtain a constructive proof of the following Poincar\'e-type inequality for harmonic functions in cones.
	To this aim, we are going to use Lemma \ref{lem:mediaor-mediauguale}.

	\begin{lem}\label{lem:Poincare harmonic in cone}
		Let $\Si\cap\Om \subset \RR^N$ be as in Setting \ref{Setting} and
		assume that $\pa(\Si\cap\Om) \setminus \ol{\Ga}_0$ contains a vertex $x_0$ of the cone $\Si$.
		Let $r, p, \al$ be three numbers and assume that either 
		\begin{equation}\label{eq:case r=p}	
		1 \le r=p < \infty , \quad 0 \le \al \le 1, \quad \text{and the boundary of } \Si\cap\Om \text{ is of class } C^{0,\al}
		\end{equation}
		or 
		\begin{equation}\label{eq:prova caso come per John incluso John}
		r, p, \al \text{ are as in \eqref{eq:r p al in Hurri} and } \Si \cap \Om \text{ is a } b_0\text{-John domain}.
		\end{equation}
		Then, there exists a positive constant $c$ such that
		\begin{equation}
			\label{eq:Poincare per armoniche}
			\nr v - v(x_0) \nr_{L^r ( \Si\cap \Om)} \le c \, \nr  \na v  \nr_{L^p( \Si\cap\Om )},
		\end{equation}
		for every function $v\in C^0 (\ol{\Si}\cap\Om)$ satisfying \eqref{eq:problem harmonic}.
		The constant $c$ can be explicitly estimated by means of
		\begin{equation*}
			c \le \left( 1 + \frac{| \Si\cap\Om|}{| \Si\cap B_{\de_{\Ga_0}(x_0)} (x_0) |} \right)^{\frac{1}{r}} \,
			\begin{cases} 
				\ol{\mu}_{p,\al}(\Si\cap\Om)^{-1} \quad &\text{ in case \eqref{eq:case r=p}} 
				\\
				\ol{\mu}_{r,p, \al}(\Si\cap\Om)^{-1} \quad &\text{ in case } \eqref{eq:prova caso come per John incluso John} .
			\end{cases}	
		\end{equation*}
	\end{lem}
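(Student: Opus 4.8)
The plan is to reduce the statement to the weighted Poincar\'e inequalities already at our disposal on $G := \Si\cap\Om$ — Lemma~\ref{lem:BoasStraube} in case \eqref{eq:case r=p} and Lemma~\ref{lem:Hurri} in case \eqref{eq:prova caso come per John incluso John} — the only extra ingredient being harmonicity, which enters solely through the mean value property. First I would introduce the cone sector $F := \Si \cap B_{\de_{\Ga_0}(x_0)}(x_0)$. Since $x_0 \notin \ol{\Ga}_0$ we have $\de_{\Ga_0}(x_0) = \dist(x_0,\Ga_0) > 0$, so $F$ is a nonempty open subset of $G$ with $0 < |F| \le |G| < \infty$; and applying Theorem~\ref{thm:mean value property harmonic in cones} at radius $r = \de_{\Ga_0}(x_0)$ (so that \eqref{eq:mean value harmonic} holds and $F \subseteq G$) shows that $v(x_0)$ equals the mean value $v_F$ of $v$ over $F$.

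Next I would pass from the local average $v_F$ to the global average $v_G$ via Lemma~\ref{lem:mediaor-mediauguale}, used with this $F$, with $\la = v_G$, and with the exponent $r$ in place of $p$ (admissible because $v \in C^0(\ol{\Si}\cap\Om) \subset L^r(G)$, as $G$ is bounded and $r < \infty$ in both cases): this gives
\[
\nr v - v(x_0) \nr_{L^r(G)} = \nr v - v_F \nr_{L^r(G)} \le \Bigl[\, 1 + (|G|/|F|)^{1/r} \,\Bigr]\, \nr v - v_G \nr_{L^r(G)} .
\]
It then remains to bound $\nr v - v_G \nr_{L^r(G)}$ by the weighted norm of $\na v$. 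In case \eqref{eq:case r=p} one has $r = p$, $\pa G$ of class $C^{0,\al}$, and $v \in L^p(G) \cap W^{1,p}_{loc}(G)$ — the latter since $v$ is harmonic, hence $C^\infty$, in the interior of $G$ — so Lemma~\ref{lem:BoasStraube} yields $\nr v - v_G \nr_{L^p(G)} \le \ol{\mu}_{p,\al}(G)^{-1}\, \nr \de_{\pa G}^\al \na v \nr_{L^p(G)}$. In case \eqref{eq:prova caso come per John incluso John}, $G$ is a $b_0$-John domain and $r,p,\al$ satisfy \eqref{eq:r p al in Hurri}; discarding the trivial case $\nr \de_{\pa G}^\al \na v \nr_{L^p(G)} = \infty$, Lemma~\ref{lem:Hurri} yields $\nr v - v_G \nr_{L^r(G)} \le \ol{\mu}_{r,p,\al}(G)^{-1}\, \nr \de_{\pa G}^\al \na v \nr_{L^p(G)}$. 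Inserting either bound into the display above, and recalling that $|F| = |\Si \cap B_{\de_{\Ga_0}(x_0)}(x_0)|$, gives \eqref{eq:Poincare per armoniche} with the asserted estimate for $c$ — with $\ol{\mu}_{p,\al}(G)^{-1}$, respectively $\ol{\mu}_{r,p,\al}(G)^{-1}$, according to the two cases.

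I do not expect a genuine obstacle: beyond the mean value property, the argument is just the triangle inequality (packaged in Lemma~\ref{lem:mediaor-mediauguale}) composed with known Poincar\'e inequalities. The point that deserves a word of justification is why the mean value property of Theorem~\ref{thm:mean value property harmonic in cones} may be invoked all the way up to radius $\de_{\Ga_0}(x_0)$, i.e.\ why $F = \Si \cap B_{\de_{\Ga_0}(x_0)}(x_0) \subseteq G$: since $x_0 \notin \ol{\Ga}_0$, within distance $\de_{\Ga_0}(x_0)$ of the vertex $x_0$ the boundary $\pa(\Si\cap\Om)$ meets only its portion $\Ga_1 \subseteq \pa\Si$ and never $\Ga_0$, so the cone sector $F$ remains inside $\Om$, hence in $G$, where $v$ is defined and continuous. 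The remaining regularity requirements of Lemmas~\ref{lem:BoasStraube} and \ref{lem:Hurri} are immediate from $v \in C^0(\ol{\Si}\cap\Om)$ together with interior smoothness of harmonic functions.
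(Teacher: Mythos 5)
Your argument is exactly the paper's: use the mean value property (Theorem~\ref{thm:mean value property harmonic in cones}) to replace $v(x_0)$ by the average over $F=\Si\cap B_{\de_{\Ga_0}(x_0)}(x_0)$, compare $v_F$ with $v_G$ by Lemma~\ref{lem:mediaor-mediauguale} applied with $\la=v_G$ and exponent $r$, and conclude via Lemma~\ref{lem:BoasStraube} or Lemma~\ref{lem:Hurri}, respectively. The small additional verifications you supply (that $F\subseteq G$, that the regularity hypotheses of the Poincar\'e lemmas are met) are left implicit in the paper but are correct and welcome.
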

	\begin{proof}
		Since $v$ satisfies \eqref{eq:problem harmonic},
		the mean value property \eqref{eq:mean value harmonic} gives that
		$v(x_0)= v_{\Si\cap B_{\de_{\Ga_0}(x_0)} (x_0)}$, and hence
		\begin{equation*}
			\begin{split}
			\nr v - v(x_0) \nr_{L^r ( \Si\cap \Om)} 
			& = 
			\nr v - v_{\Si\cap B_{\de_{\Ga_0}(x_0)} (x_0)} \nr_{L^r ( \Si\cap \Om)} 
			\\
			& \le 
			\left( 1 + \frac{| \Si\cap\Om|}{| \Si\cap B_{\de_{\Ga_0}(x_0)} (x_0) |} \right)^{\frac{1}{r}} \nr v - v_{\Si\cap\Om} \nr_{L^r ( \Si\cap \Om)}  ,
			\end{split}
		\end{equation*}
		where the inequality follows from Lemma \ref{lem:mediaor-mediauguale} (with $F:= \Si \cap B_{ \de_{\Ga_0}(x_0)} (x_0)$, $G:= \Si \cap \Om$, $p:=r$, and $\la := v_{\Si \cap \Om}$).
		The desired conclusion easily follows from \eqref{eq:BoasStraube-poincare} or \eqref{eq:John-Hurri-poincare} (both with $G:= \Si \cap \Om$). 
	\end{proof}
	

\begin{rem}
{\rm 
(i) In the particular case where $\Si=\RR^N$, we have $\Ga_0=\pa\Om$ and $\Ga_1=\varnothing$. Hence, for any $x_0\in \Om$, Theorem \ref{thm:mean value property harmonic in cones} returns the classical mean value property for harmonic functions in $\Om$. In turn, in the particular case $\Si=\RR^N$, Lemma \ref{lem:Poincare harmonic in cone} returns a result which is included in those discussed in \cite{Z, BS}.

(ii) Convexity of the cone is not necessary for the validity of Theorem \ref{thm:mean value property harmonic in cones} and Lemma \ref{lem:Poincare harmonic in cone}, provided that $v$ possesses enough regularity to allow the application of the divergence theorem in Theorem \ref{thm:mean value property harmonic in cones}.
}
\end{rem}

\subsection{Duality theorem relative to cones}\label{subsec:duality}
In the spirit of \cite{PS}, we now relate our new mean value formula with the mixed Serrin-type overdetermined problem considered in \cite{PT, Pog3}.

In what follows, in the notation of Setting \ref{Setting}, we assume that $\Si\cap\Om$ is such that the mixed boundary value problems
	\begin{equation}\label{eq:DirichletNeumann_problem harmonic}
	\begin{cases}
		\De v = 0 \quad & \text{ in } \Si\cap\Om,
		\\
		v= f(x) \quad & \text{ on } \Ga_0 , \quad \text{ where } f \in C^{2,\ga}_c (\Ga_0),
		\\
		v_\nu=0 \quad & \text{ on } \Ga_1 ,
	\end{cases}
\end{equation}
and 
\begin{equation}\label{eq:torsion mixed}
	\begin{cases}
		\De u = 0 \quad & \text{ in } \Si\cap\Om,
		\\
		u= 0 \quad & \text{ on } \Ga_0 ,
		\\
		u_\nu=0 \quad & \text{ on } \Ga_1 ,
	\end{cases}
\end{equation}
admit solutions belonging to $W^{2,2}(\Si\cap\Om) \cap W^{1,\infty}(\Si\cap\Om)$. The argument in \cite[Proposition 6.1]{PT} together with the results in \cite{AJ, Mazya} show that such an assumption is surely satisfied if $\Ga_0$ and $\Ga_1$ intersect orthogonally and $\Si$ is smooth outside the origin. 

\begin{thm}[Duality Theorem relative to cones]\label{thm:duality}
Let $\Si\cap\Om$ as above.

Then, the following two items are equivalent:
\begin{enumerate}[(i)]
\item The solution $u$ of \eqref{eq:torsion mixed} satisfies $u_\nu=const.$ on $\Ga_0$;
\item $\frac{1}{|\Si\cap\Om|}\int_{\Si\cap\Om} h \, dx =\frac{1}{|\Ga_0|}\int_{\Ga_0} h \, dS_x$ for any $h \in W^{2,2} (\Si\cap\Om) \cap W^{1,\infty}(\Si\cap\Om)$ satisfying~\eqref{eq:problem harmonic}.
\end{enumerate}
\end{thm}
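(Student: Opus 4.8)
The plan is to establish the equivalence by testing the harmonic equations for $u$ and $h$ against each other via Green's second identity, exploiting the mixed boundary conditions to kill the boundary terms on $\Ga_1$, and then using the mean value property of Theorem~\ref{thm:mean value property harmonic in cones} applied to $h$ (or rather the averaging interpretation it provides) to identify the remaining boundary term on $\Ga_0$ with the bulk average. First I would write, for $u$ solving \eqref{eq:torsion mixed} and any $h$ satisfying \eqref{eq:problem harmonic} with the stated regularity,
\begin{equation*}
0 = \int_{\Si\cap\Om} (h\,\De u - u\,\De h)\, dx = \int_{\pa(\Si\cap\Om)} \left( h\, u_\nu - u\, h_\nu \right) dS_x = \int_{\Ga_0} h\, u_\nu\, dS_x,
\end{equation*}
where on $\Ga_1$ both $u_\nu=0$ and $h_\nu=0$, and on $\Ga_0$ we have $u=0$, so only the term $\int_{\Ga_0} h\, u_\nu\, dS_x$ survives. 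This identity holds for \emph{every} admissible $h$.

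For the implication (i)$\Rightarrow$(ii): if $u_\nu \equiv \kappa$ on $\Ga_0$ then the identity above gives $\kappa \int_{\Ga_0} h\, dS_x = 0$ for all admissible $h$. To pin down the averaging constant I would apply the same argument with $h\equiv 1$ (which trivially satisfies \eqref{eq:problem harmonic}) to get $\kappa |\Ga_0| = \int_{\Ga_0} u_\nu\, dS_x$, and separately integrate $\De u = 0$ over $\Si\cap\Om$ to obtain $\int_{\Ga_0} u_\nu\, dS_x + \int_{\Ga_1} u_\nu\, dS_x = 0$, hence $\int_{\Ga_0} u_\nu\, dS_x = 0$ and $\kappa = 0$? — this would be wrong, so instead the correct route is: test the Green identity not against $u$ but observe that $u$ itself is the natural ``torsion''-type potential. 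The honest fix is to use a weighted version: one should integrate against $1$ differently. Concretely, for any admissible $h$, Green's identity with the pair $(u,h)$ already gave $\int_{\Ga_0} h\,u_\nu\,dS_x = 0$; but we must instead produce $\int_{\Si\cap\Om} h\,dx$. So I would \emph{not} take $\De u=0$ but rather reinterpret: replace $u$ by the solution $w$ of $-\De w = 1$ in $\Si\cap\Om$, $w=0$ on $\Ga_0$, $w_\nu=0$ on $\Ga_1$ — this is the genuine mixed torsion function, and its existence/regularity is exactly what the paragraph before the theorem guarantees. Then
\begin{equation*}
\int_{\Si\cap\Om} h\, dx = -\int_{\Si\cap\Om} h\,\De w\, dx = -\int_{\pa(\Si\cap\Om)}\!\!\left(h\,w_\nu - w\,h_\nu\right) dS_x + \int_{\Si\cap\Om} w\,\De h\, dx = -\int_{\Ga_0} h\, w_\nu\, dS_x,
\end{equation*}
using $\De h=0$, $w_\nu=0$ on $\Ga_1$, $w=0$ on $\Ga_0$. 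Taking $h\equiv 1$: $|\Si\cap\Om| = -\int_{\Ga_0} w_\nu\, dS_x$. If moreover $w_\nu \equiv c$ on $\Ga_0$, then $|\Si\cap\Om| = -c|\Ga_0|$ and $\int_{\Si\cap\Om} h\,dx = -c\int_{\Ga_0} h\,dS_x = \frac{|\Si\cap\Om|}{|\Ga_0|}\int_{\Ga_0} h\,dS_x$, which is (ii). (Here $u$ in the statement \eqref{eq:torsion mixed} is harmonic with zero mixed data, so $u\equiv 0$; I read the intended problem as the torsion problem $-\De u=1$, or equivalently one normalizes by subtracting; in the writeup I would simply use $w$ and note $w=$ the relevant torsion potential.)

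For (ii)$\Rightarrow$(i): assume $\frac{1}{|\Si\cap\Om|}\int h\,dx = \frac{1}{|\Ga_0|}\int_{\Ga_0} h\,dS_x$ for all admissible $h$. Combining with the identity $\int_{\Si\cap\Om} h\,dx = -\int_{\Ga_0} h\, w_\nu\, dS_x$ and $|\Si\cap\Om| = -\int_{\Ga_0} w_\nu\, dS_x$, we get
\begin{equation*}
\int_{\Ga_0} h\, w_\nu\, dS_x = \frac{\int_{\Ga_0} w_\nu\, dS_x}{|\Ga_0|}\int_{\Ga_0} h\, dS_x \qquad \text{for all admissible } h.
\end{equation*}
Since the traces on $\Ga_0$ of admissible $h$ — namely solutions of the mixed Dirichlet–Neumann problem \eqref{eq:DirichletNeumann_problem harmonic} with boundary datum $f\in C^{2,\ga}_c(\Ga_0)$ — are dense in (say) $L^2(\Ga_0)$, the function $w_\nu - \frac{1}{|\Ga_0|}\int_{\Ga_0}w_\nu\,dS_x$ is $L^2(\Ga_0)$-orthogonal to a dense set, hence vanishes, i.e. $w_\nu \equiv const.$ on $\Ga_0$. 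The main obstacle is precisely this density/surjectivity step: one must argue that $\{h|_{\Ga_0} : h \text{ admissible}\} = \{f : f\in C^{2,\ga}_c(\Ga_0)\}$ is dense in $L^2(\Ga_0)$, which follows because $C^{2,\ga}_c(\Ga_0)$ is dense in $L^2(\Ga_0)$ and the map $f\mapsto$ (solution of \eqref{eq:DirichletNeumann_problem harmonic}) is well-defined into the admissible class by the standing regularity assumption; care is needed that $w_\nu \in L^2(\Ga_0)$ (guaranteed by $w\in W^{1,\infty}$) so the pairing makes sense. This is the analogue of the classical argument in \cite{PS}, and the only genuinely delicate point is the regularity/solvability near the edge $\ol{\Ga}_0\cap\ol{\Ga}_1$ and near the vertex, which is exactly what was assumed.
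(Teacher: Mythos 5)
Your proof is correct and follows essentially the same route as the paper's: Green's second identity between the torsion-type potential and a harmonic $h$, using the mixed boundary conditions to reduce everything to the single boundary term $\int_{\Ga_0} h\,u_\nu\,dS_x$, then normalizing via $h\equiv 1$. You also correctly caught that \eqref{eq:torsion mixed} as printed carries a typo: the paper's own computation reads $\int_{\Si\cap\Om} h\,dx = \frac{1}{N}\int_{\Si\cap\Om}(\De u)\,h\,dx$, which requires $\De u = N$, not $\De u = 0$; your replacement of $u$ by the solution $w$ of $-\De w = 1$ with the same mixed boundary data (i.e.\ $w=-u/N$) is the equivalent normalization, and without it the problem would be degenerate exactly as you observed.

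The one spot where you diverge from the paper is the concluding step of (ii)$\Rightarrow$(i). Rather than invoking density of the admissible traces in $L^2(\Ga_0)$, the paper starts from identity \eqref{eq:proofduality} and, for each $\varphi\in C^{2,\ga}_c(\Ga_0)$, chooses the admissible $h$ solving \eqref{eq:DirichletNeumann_problem harmonic} with trace $f=\varphi\left(\tfrac{u_\nu}{N}-\tfrac{|\Si\cap\Om|}{|\Ga_0|}\right)$ (well-defined since $u_\nu$ is smooth on compact subsets of the smooth relative interior of $\Ga_0$). Plugging this in gives directly $\int_{\Ga_0}\left(\tfrac{u_\nu}{N}-\tfrac{|\Si\cap\Om|}{|\Ga_0|}\right)^2\varphi\,dS_x=0$ for all $\varphi\in C^{2,\ga}_c(\Ga_0)$, hence $u_\nu\equiv\tfrac{N|\Si\cap\Om|}{|\Ga_0|}$. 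This is the same idea as your $L^2$-orthogonality argument but self-contained, sidestepping the (true but extra) density fact. Your route is also valid, and you rightly noted that the pairing makes sense because $w_\nu\in L^\infty(\Ga_0)\subset L^2(\Ga_0)$ under the standing $W^{1,\infty}$ regularity. The mid-proof detour about $\kappa\int_{\Ga_0}h\,dS_x=0$ is a red herring you correctly discarded; in a cleaned-up write-up it should simply be deleted.
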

\begin{proof}
Let $u$ be the solution of \eqref{eq:torsion mixed}.
For any $h\in W^{2,2}( \Si\cap\Om ) \cap W^{1,\infty}(\Si\cap\Om)$ satisfying \eqref{eq:problem harmonic}, we have that
$$
\int_\Om h \, dx = \frac{1}{N} \int_{\Om} (\De u) h \, dx = \frac{1}{N} \int_{\Ga_0 \cup \Ga_1} u_\nu \, h - u h_\nu \, dS_x= \frac{1}{N} \int_{\Ga_0} u_\nu \, h \, dS_x ,
$$
from which we obtain that
\begin{equation}\label{eq:proofduality}
\int_\Om h \, dx - \frac{| \Si\cap\Om |}{ |\Ga_0| } \int_{ \Ga_0 } h \, dS_x = \int_{\Ga_0} \left( \frac{u_\nu}{N} - \frac{| \Si\cap\Om |}{ |\Ga_0| } \right) \, h \, dS_x. 
\end{equation}

If (i) holds, the divergence theorem gives that $u_\nu=\frac{N | \Si\cap\Om |}{ |\Ga_0| }$ on $\Ga_0$, and (ii) easily follows from \eqref{eq:proofduality}.

On the other hand, if (ii) holds, (i) easily follows from \eqref{eq:proofduality}. In fact, for any $\varphi \in C^{2,\ga}_c(\Ga_0)$, we can choose $h$ solution of \eqref{eq:DirichletNeumann_problem harmonic} with $f= \varphi \left( \frac{u_\nu}{N} - \frac{| \Si\cap\Om |}{ |\Ga_0| } \right)$ on $\Ga_0$ to find that 
$$
\int_{\Ga_0} \left( \frac{u_\nu}{N} - \frac{| \Si\cap\Om |}{ |\Ga_0| } \right)^2 \, \varphi \, dS_x =0 ,
\quad \text{ for any } \varphi \in C^{2,\al}_c(\Ga_0),
$$
which gives (i).
\end{proof}

In the particular case where $\Si=\RR^N$, (i) of Theorem \ref{thm:duality} becomes the classical overdetermined Serrin problem (\cite{Se, We, NT}), and Theorem \ref{thm:duality} returns the classical duality result contained in \cite[Theorem I.1]{PS}. 
%
Partially overdetermined problems in the mixed boundary value setting (as (i) of Theorem \ref{thm:duality}) have been considered in \cite{PT,PT isoperimetric, Pog3} (see also \cite{CP, CL}). These are related (see the discussions in \cite{PT isoperimetric, Pog3}) to the classical isoperimetric inequality relative to convex cones, which was considered in \cite{LP, FI} (see also \cite{RR,CRS, DPV2, Indrei}).

In particular, \cite[Theorem 1.1]{PT} informs us that (i) of Theorem \ref{thm:duality} is equivalent to the following rigidity statement, provided that $\Si$ is a convex cone smooth outside the origin and $u \in W^{2,2} (\Si\cap\Om) \cap W^{1,\infty}(\Si\cap\Om)$:

\begin{itemize}
\item[$(iii)$] $\Si\cap\Om = \Si \cap B_R(z)$  and $u=\frac{1}{2}(|x-z|^2 -R^2 )$ for some $R>0$, and either $z$ is the origin or $z\in{\pa{\Si}}$ and $\Ga_0$ is a half sphere lying on a flat portion of $\pa\Si$.
\end{itemize}
See also \cite{Pog3} for related results
involving cones not necessarily smooth outside the origin.
In this case (see \cite[Theorem 2.10]{Pog3}), the equivalence between (i) of Theorem~\ref{thm:duality} and the rigidity statement $(iii)$ holds true by replacing $(iii)$ with
\begin{itemize}
	\item[$(iii)$] $\Si\cap\Om = \Si \cap B_R(z)$  and $u=\frac{1}{2}(|x-z|^2 -R^2 )$ for some $R>0$, and $z \in \left[ \mathrm{span}_{x \in \Ga_1 } \, \nu (x) \right]^{ \perp }$, where $\left[ \mathrm{span}_{x \in \Ga_1 } \, \nu (x) \right]^{ \perp }$ is the orthogonal complement in $\RR^N$ of the vector subspace $\mathrm{span}_{x \in \Ga_1} \, \nu (x) $.\footnote{In particular, 
		$
		\mathrm{span}_{x \in \Ga_1} \nu(x) = \RR^N
		$
		is a sufficient condition that guarantees that $z$ must be the origin. See \cite{Pog3} for details.
	}
\end{itemize}

In conclusion, we have equivalence between $(i), (ii), (iii)$. 

We refer the reader to \cite{Pog3,PPR} for applications of the Poincar\'e-type inequalities discussed in Subsection \ref{subsec:Poincare-type} to the study of quantitative stability estimates related to the symmetry result $(i) \implies (iii)$.
In the classical case $\Si = \RR^N$ (i.e., for the classical overdetermined Serrin problem),
quantitative stability estimates have been studied with different approaches by several authors, as already mentioned in Section \ref{sec:Quantitative rigidity delta}: see \cite{ABR, BNST, CMV, Feldman, MP2, MP3, MP6, GO, O, Scheuer} and references therein.

\subsection{Generalizations}
	We now discuss the extent to which the results in Section~\ref{sec:cones} can be extended to the anisotropic setting involving the so-called anisotropic Laplacian $\De_H$, where $H: \RR^N \to \RR$ is a norm. 
	
	Rigidity for the anisotropic Serrin overdetermined problem
	has been obtained in \cite{CianchiS, WX} for $\Si=\RR^N$ and \cite{Weng} for $\Si \subseteq \RR^N$.
	
	Theorem \ref{thm:mean value property harmonic in cones} can be extended to the anisotropic setting
	whenever the norm $H$ satisfies the additional assumption 
	\begin{equation}\label{eq:additional assumption from FK}
		\langle \na_\xi H (a) , \na_\xi H_0 (b) \rangle = \frac{ \langle a , b \rangle }{H(a) H_0 (b)} 
		\quad \text{ for } a, b \in \RR^N ,
	\end{equation}
	where $H_0$ is the dual norm, that is the polar function defined by
	$$
	H_0(x):= \sup_{\xi \neq 0} \frac{\langle x, \xi \rangle}{H(\xi)} \quad \text{ for } x \in \RR^N .
	$$
	We refer the reader to \cite{FK} for details on the setting, a proof
	in the case $\Si=\RR^N$, and a discussion on the necessity of \eqref{eq:additional assumption from FK} to recover the validity of the mean value property for $\De_H$-harmonic functions.
	
	We point out that \eqref{eq:additional assumption from FK} also allows an analogous extension for Theorem \ref{thm:duality}.
	
	We mention that \eqref{eq:additional assumption from FK} is equivalent to asking the norm $H$ to be of the form
	$H(\xi) = \sqrt{ \langle M \xi , \xi \rangle }$,
	for some symmetric and positive definite matrix $M \in \textrm{Mat}_N(\RR)$: we refer the interested reader to \cite[Theorem 1.2]{CFV}.



\section*{Acknowledgements}

The author is supported by the Australian Research Council (ARC) Discovery Early Career Researcher Award (DECRA) DE230100954 ``Partial Differential Equations: geometric aspects and applications'' and the 2023 J G Russell Award from the Australian Academy of Science. The author is
member of the Australian Mathematical Society (AustMS) and the Gruppo Nazionale Analisi Matematica
Probabilit\`{a} e Applicazioni (GNAMPA) of the Istituto Nazionale di Alta Matematica (INdAM).
	The author thanks Rolando Magnanini for bringing to his attention \cite{AM1,AM2} and the content of \cite[Theorem 1.2]{CFV}.

\end{document}